\numberwithin{equation}{section}
\theoremstyle{plain}
\newtheorem{thm}{Theorem}[section]
\newtheorem{lem}[thm]{Lemma}
\newtheorem{cor}[thm]{Corollary}
 \theoremstyle{definition}
\newtheorem{defn}[thm]{Definition}
\newtheorem{rem}[thm]{Remark}
\newcommand{\mb}[1]{\mathbb{#1}}
\newcommand{\mr}[1]{\mathrm{#1}}
\renewcommand{\emptyset}{\varnothing}
\newcommand{\QT}{\operatorname{QType}}
\begin{document}
\title[Bounding the signed count of real bitangents]{Bounding the signed count of real bitangents to plane quartics}

\author{Mario Kummer}
\address{Fakult\"at Mathematik \\ Institut f\"ur Geometrie \\ Technische Universit\"at Dresden}
\email{mario.kummer@tu-dresden.de}
\urladdr{tu-dresden.de/mn/math/geometrie/kummer/startseite/}

\author{Stephen McKean}
\address{Department of Mathematics \\ Harvard University} 
\email{smckean@math.harvard.edu}
\urladdr{shmckean.github.io}

\subjclass[2020]{Primary: 14H50. Secondary: 14P99, 14N10.}

\begin{abstract}
Using methods from enriched enumerative geometry, Larson and Vogt gave a signed count of the number of real bitangents to real smooth plane quartics. This signed count depends on a choice of a distinguished line. Larson and Vogt proved that this signed count is bounded below by 0, and they conjectured that the signed count is bounded above by 8. We prove this conjecture using real algebraic geometry, plane geometry, and some properties of convex sets.
\end{abstract}

\maketitle

\section{Introduction}
In this note, we prove an upper bound to the signed count of real bitangents to real smooth plane quartics introduced by Larson and Vogt~\cite{LV21}. Using tools from $\mb{A}^1$-enumerative geometry (also called \textit{quadratic} or \textit{enriched} enumerative geometry), Larson and Vogt define the sign $\QT_L(B)$ (relative to an auxiliary line $L\subset\mb{P}^2$) of a bitangent $B$ to a real smooth plane quartic $Q$. They then define the \textit{signed count}
\[s_L(Q):=\sum_{B\text{ real bitangent}}\QT_L(B).\]
If $L\cap Q(\mb{R})=\varnothing$, then $s_L(Q)=4$~\cite[Theorem 1]{LV21}. In general, $s_L(Q)$ is a non-negative even integer~\cite[Proposition 4.3]{LV21}. Larson and Vogt give examples of lines and quartics such that $s_L(Q)=0,2,4,6$, and 8. This leads to the conjecture that $s_L(Q)\in\{0,2,4,6,8\}$ for any choice of line and quartic~\cite[Conjecture 2]{LV21}. Using tools from tropical geometry, Markwig, Payne, and Shaw give a tropical criterion for quartics that satisfy $s_L(Q)\in\{0,2,4\}$~\cite[Theorem 5.2]{MPS22}.

We gather a few ideas from real algebraic geometry and plane geometry to prove $s_L(Q)\leq 8$ for any choice of plane quartic and auxiliary line. Paired with Larson and Vogt's lower bound and examples, it follows that $s_L(Q)\in\{0,2,4,6,8\}$. 

\begin{thm}\label{main theorem}
Let $Q\subset\mb{P}^2$ be a real smooth plane quartic. For any admissible line $L$, the signed count $s_L(Q)$ is at most 8. In particular, $s_L(Q)\in\{0,2,4,6,8\}$.
\end{thm}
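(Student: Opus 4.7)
The plan is to recast the sign $\QT_L(B)$ as an explicit geometric condition on the bitangent line $B$ relative to $L$, and then to count positively-signed real bitangents using the convex geometry of the ovals of $Q(\mb{R})$. The first step is to unpack Larson and Vogt's definition of $\QT_L(B)$ into a concrete condition involving $B$, its two real tangency points $p_1, p_2$ on $Q$, and the intersection $B \cap L$. The baseline case $s_L(Q) = 4$ when $L\cap Q(\mb{R})=\varnothing$ strongly constrains what this condition must look like: the sign should depend on how the triple $(p_1, p_2, B\cap L)$ is ordered along $B(\mb{R})$, possibly coupled with information about on which side of $L$ the arcs of $Q$ near $p_1, p_2$ lie.

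By Harnack's theorem, $Q(\mb{R})$ has at most four ovals, and the real bitangents split into \emph{outer} bitangents, touching two distinct ovals, and \emph{inner} bitangents, touching a single non-convex oval twice. For outer bitangents I would exploit that the convex hulls of two disjoint ovals are disjoint compact convex sets in the affine patch $\mb{P}^2\setminus L$, which admit at most four common tangent lines; the geometric sign criterion from the first step then caps how many of these four can contribute positively as a function of the position of $L$. For inner bitangents I would use the convex hull of the single oval together with its reflex arcs to bound the signed count. Summing the contributions from each oval and each pair of ovals, and combining with the parity statement \cite[Proposition 4.3]{LV21}, should yield $s_L(Q)\leq 8$; paired with Larson and Vogt's lower bound and their realization examples, this gives $s_L(Q)\in\{0,2,4,6,8\}$.

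The main obstacle is the inner-bitangent count. A sufficiently non-convex oval of $Q(\mb{R})$ can carry several double tangents, and bounding their signed contribution using only the position of $L$ and the convex hull of the oval requires a careful real-algebraic input: Pl\"ucker-type bounds on the dual curve restrict the total number of bitangents touching a single oval, and these bounds must be married to the sign criterion and to the convex geometry of the reflex arcs in order to produce the desired cancellation. I expect this step, rather than the outer-bitangent estimate (which should reduce to fairly standard common-tangent considerations for two convex bodies), to be the technical core of the argument.
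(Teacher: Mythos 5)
Your proposal does not close the main gap. The central difficulty is not the sign criterion itself nor the inner bitangents, and your plan lacks the two ingredients that actually make the bound $8$ (rather than some larger number) come out. First, you have no mechanism for comparing $s_L(Q)$ for an arbitrary admissible $L$ with the known value $s_{L_0}(Q)=4$ for a line $L_0$ disjoint from $Q(\mb{R})$. The paper's proof runs entirely through the relation
\[ s_{L}(Q)=s_{L_0}(Q)-2\bigl(\sigma(L_0,L,+1)-\sigma(L_0,L,-1)\bigr), \]
where $\sigma(L_0,L,\epsilon)$ counts split bitangents of $\QT_{L_0}$-sign $\epsilon$ whose \emph{grate} (the convex hull of the two tangency points in the affine chart $\mb{P}^2\setminus L_0$) is crossed by $L$ (Lemma~\ref{lem:gratesformula}); the whole problem becomes showing that $L$ crosses at most two more negative grates than positive ones. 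Without some such device, ``the sign should depend on how the triple $(p_1,p_2,B\cap L)$ is ordered along $B(\mb{R})$'' remains a heuristic, and you cannot aggregate the signs over the up to $28$ real bitangents. Second, and more importantly, you have misplaced the technical core. Your ``inner'' bitangents, touching a single oval twice, are harmless: by the half-plane characterization of the sign (Lemma~\ref{lem:signviahalfplanes} and Corollary~\ref{cor:split one component}), a split bitangent meeting only one connected component always has positive sign, so no Pl\"ucker-type analysis of reflex arcs is needed. The genuine work is in the ``outer'' case, and there your convex-body picture is too coarse: the four bitangents joining two ovals $Q_1,Q_2$ are \emph{not} simply the four common supporting lines of two disjoint convex hulls (the convex hulls need not be disjoint a priori, and the two separating bitangents need not support either hull when the ovals are non-convex). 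The paper establishes that there are exactly four such bitangents, two of each sign, using nontrivial input on holomorphic differentials without real zeros and counts of semi-orientations (Lemma~\ref{lem:four grates}).

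Finally, even granting a per-pair bound, your plan of ``summing the contributions from each oval and each pair of ovals'' cannot yield $8$: with four ovals there are six pairs, and a crude sum of per-pair excesses of $2$ would only give $s_L(Q)\leq 4+2\cdot 6=16$. The missing idea is a localization step: one must show that a pair of ovals can contribute a positive excess only if $L$ meets \emph{both} of those ovals, and then invoke B\'ezout to conclude that $L$ meets at most two ovals, so at most one pair contributes. This is exactly what Lemmas~\ref{lem:meeting a negative grate} and~\ref{lem:both negative grates} accomplish via the plane geometry of the quadrilaterals spanned by the positive and negative grates. Your proposal contains no analogue of this step, and without it the approach cannot reach the bound $8$.
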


\subsection*{Acknowledgements}
MK has been supported by the Deutsche Forschungsgemeinschaft under Grant No. 502861109. SM received support from an NSF MSPRF grant (DMS-2202825).

\section{A few results on plane geometry and convex sets}
In this section, we collect a few results on plane geometry and convex sets that we will need to prove Theorem~\ref{main theorem}.

\begin{lem}\label{lem:at least 2 boundary}
 Let $K\subset\mb{R}^n$ a compact set and $L\subset\mb{R}^n$ a line. If $|L\cap K|\geq 2$, then $|L\cap\partial K|\geq2$.
\end{lem}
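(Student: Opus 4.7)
The plan is to exploit the one-dimensional structure of $L$ and pick out the two extremal points of $L \cap K$ on $L$. First I would fix a linear parametrization $\varphi \colon \mb{R} \to L$. Since $K$ is compact, $L \cap K$ is closed in $L$ and bounded, so $S := \varphi^{-1}(L \cap K) \subset \mb{R}$ is a nonempty compact subset of $\mb{R}$. Because $\varphi$ is a bijection and $|L \cap K| \geq 2$, the set $S$ contains at least two distinct real numbers, so $a := \min S$ and $b := \max S$ exist and satisfy $a < b$.

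Next I would verify that $p := \varphi(a)$ and $q := \varphi(b)$ both lie in $\partial K$. Since $K$ is closed, $\partial K = K \setminus \operatorname{int}(K)$, so it suffices to check that neither $p$ nor $q$ lies in $\operatorname{int}(K)$. If $p \in \operatorname{int}(K)$, then some open ball around $p$ is contained in $K$; pulling back through $\varphi$, this yields an open interval $(a - \delta, a + \delta) \subset S$ for some $\delta > 0$, contradicting the minimality of $a$. The same argument applied to $b$ shows $q \in \partial K$. Since $p \neq q$, we conclude $|L \cap \partial K| \geq 2$.

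There is no real obstacle here; the only small subtlety is to recognize that one must use minimality/maximality of the parameter values (rather than, say, just picking the two original witnesses $p_1, p_2 \in L \cap K$, which need not themselves lie on $\partial K$). Everything else follows from elementary point-set topology on the line.
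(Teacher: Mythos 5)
Your proof is correct and follows essentially the same route as the paper's: identify $L$ with $\mb{R}$, take the minimum and maximum of the compact set $L\cap K$, and observe these extremal points cannot be interior points of $K$. You simply spell out the final step (extremality in the parameter implies membership in $\partial K$) that the paper leaves implicit.
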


\begin{proof}
  We identify $L$ with the real line $\mb{R}$. Then $L\cap K$ is a compact subset of $\mb{R}$ with at least two elements. In particular, its minimum and maximum are different points on the boundary of $L\cap K$. Thus $L$ intersects the boundary of $K$ in at least two points.
\end{proof}

\begin{cor}\label{cor:compact boundary}
 Let $K\subset\mb{R}^n$ a compact set and $L\subset\mb{R}^n$ a line. If $L\cap K^\circ\neq\emptyset$, then $|L\cap\partial K|\geq2$.
\end{cor}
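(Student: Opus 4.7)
The plan is to reduce directly to Lemma~\ref{lem:at least 2 boundary}. It suffices to verify that the hypothesis $L\cap K^\circ\neq\emptyset$ upgrades to $|L\cap K|\geq 2$, after which the lemma immediately delivers $|L\cap\partial K|\geq 2$.

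To establish this upgrade, I would pick any point $p\in L\cap K^\circ$ and invoke the definition of interior: there exists $\varepsilon>0$ such that the open ball $B(p,\varepsilon)$ is contained in $K^\circ\subseteq K$. The intersection $L\cap B(p,\varepsilon)$ is an open segment of $L$ of length $2\varepsilon$, hence contains infinitely many points, all of which lie in $K$. In particular $|L\cap K|\geq 2$ and Lemma~\ref{lem:at least 2 boundary} applies to produce the two required boundary points. (One could alternatively note that, since $K$ is closed, any boundary point of $L\cap K$ inside $L$ fails to lie in $K^\circ$ and therefore belongs to $\partial K$, but this observation is already implicit in the proof of the lemma.)

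I do not anticipate any real obstacle: the statement is a one-step corollary, and the only content is the elementary fact that a line passing through an interior point of $K$ picks up a nondegenerate segment inside $K$. The compactness hypothesis is used only indirectly, through the lemma.
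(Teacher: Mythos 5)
Your proposal is correct and follows exactly the paper's route: the paper's proof likewise observes that $L\cap K^\circ\neq\emptyset$ forces $|L\cap K|=\infty$ and then invokes Lemma~\ref{lem:at least 2 boundary}. You simply spell out the open-ball argument that the paper leaves implicit.
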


\begin{proof}
    The condition $L\cap K^\circ\neq\emptyset$ implies that $|L\cap K|=\infty$, so we can apply the previous lemma.
\end{proof}

\begin{cor}\label{cor:bigon boundary}
 Let $I\subset\mb{R}^2$ be a line segment with endpoints $a$ and $b$. Let $C\subset\mb{R}^2$ be a Jordan arc with endpoints $a$ and $b$. Assume that $|I\cap C|$ is finite. Then any line $L\subset\mb{R}^2$ that meets $I$ also meets $C$.
\end{cor}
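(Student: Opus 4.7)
The plan is to argue by contrapositive: I will show that if a line $L$ is disjoint from $C$, then $L$ is also disjoint from $I$. The argument rests on two basic facts: a Jordan arc is connected, and the complement of a line in $\mb{R}^2$ decomposes into two disjoint convex open half-planes.

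Suppose $L \cap C = \emptyset$. Write $\mb{R}^2 \setminus L = H_+ \sqcup H_-$, the decomposition into open half-planes. Since $C$ is connected and avoids $L$, it lies entirely in one of the two components, say $C \subseteq H_+$. In particular, the endpoints $a, b \in C$ both belong to $H_+$. Convexity of $H_+$ then forces the line segment $I$ from $a$ to $b$ to be contained in $H_+$, and hence $L \cap I = \emptyset$. The contrapositive is the desired statement.

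This argument is short enough that the main question is choosing the right vantage point rather than overcoming a real obstacle; in particular, it uses neither Lemma~\ref{lem:at least 2 boundary} nor Corollary~\ref{cor:compact boundary}, and it does not invoke the hypothesis that $|I \cap C|$ is finite. A more ``local'' alternative fitting the spirit of the earlier results would form the compact set $K$ obtained from $I \cup C$ together with the bounded components of its complement, verify that a line meeting the interior of $I$ transversally must enter $K^\circ$, and then apply Corollary~\ref{cor:compact boundary} to produce at least two points of $L \cap \partial K \subseteq L \cap (I \cup C)$, at most one of which can lie on $I$. That approach requires extra bookkeeping (especially when $|I \cap C| > 2$ forces several bounded components), which the convex half-plane argument cleanly sidesteps.
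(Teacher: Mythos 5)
Your contrapositive argument is correct and complete: if $L\cap C=\emptyset$, then the connected set $C$ lies in one of the two open half-planes of $\mb{R}^2\setminus L$, so both endpoints $a,b$ do, and convexity of that half-plane places the segment $I$ there as well. This is a genuinely different route from the paper's proof, which reduces to the case where $I\cap C$ consists only of the endpoints (using the finiteness hypothesis), observes that $I\cup C$ is then a Jordan curve, and applies the Jordan curve theorem together with Corollary~\ref{cor:compact boundary} to force a second intersection of $L$ with $\partial K=I\cup C$ beyond the one on $I$. Your approach is strictly more elementary: it bypasses the Jordan curve theorem, the reduction step, and the earlier corollaries entirely, and it shows that the hypothesis $|I\cap C|<\infty$ is not actually needed for the statement --- a point you correctly flag. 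The only thing the paper's heavier machinery buys is stylistic continuity with Lemma~\ref{lem:at least 2 boundary} and Corollary~\ref{cor:compact boundary}, which are genuinely needed elsewhere (e.g.\ in Lemma~\ref{lem:convextwo} and Lemma~\ref{lem:quadri2}); for this particular corollary your half-plane argument is the better proof.
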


\begin{proof}
 Let $\varphi\colon [s,t]\to \mb{R}^2$ an injective continuous map with $\varphi(t_0)=a$ and $\varphi(t_1)=b$ whose image is $C$. Let $s=t_0<\cdots<t_m=t$ such that $\varphi^{-1}(I\cap C)=\{t_0,\ldots,t_m\}$. Choose $0\leq i<m$ such that $L$ intersects the line segment $I'$ with end points $\varphi(t_i)$ and $\varphi(t_{i+1})$. By replacing $I$ by $I'$ and $C$ by $\varphi([t_i,t_{i+1}])$ we can restrict to the case that $I\cap C$ consists of the endpoints of $I$ only. This means that $I\cup C$ is a Jordan curve. Without loss of generality we may assume $L$ intersects $I$ transversely in exactly one point and that this point is not on $C$. This implies that $L$ intersects the interior  (in the sense of the Jordan curve theorem) of the Jordan curve $I\cup C$ whose compact closure we denote by $K$. By Corollary~\ref{cor:compact boundary} $L$ intersects $\partial K=I\cup C$ in at least two points. Thus $L\cap C\neq\emptyset$ because $|I\cap L|=1$.
\end{proof}

\begin{lem}\label{lem:convextwo}
 Let $K\subset\mb{R}^n$ a compact convex set and $L\subset\mb{R}^n$ a line. If $L\cap K^\circ\neq\emptyset$, then $|L\cap\partial K|=2$.
\end{lem}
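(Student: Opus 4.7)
The plan is to combine Corollary~\ref{cor:compact boundary} (which already gives $|L\cap\partial K|\geq 2$) with a convexity argument that rules out a third boundary point on $L$. So the real work is to prove $|L\cap\partial K|\leq 2$.

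First I would parametrize $L$ by $\mathbb{R}$ and observe that, by convexity of $K$, the intersection $L\cap K$ is a compact convex subset of $\mathbb{R}$, hence a closed interval $[a,b]$. The hypothesis $L\cap K^\circ\neq\emptyset$ guarantees $a<b$, and it furnishes a point $q\in L\cap K^\circ$ with $a\leq q\leq b$. Any point of $L\cap\partial K$ must lie in $L\cap K=[a,b]$, so it suffices to show that every point $c$ strictly between $a$ and $b$ on $L$ actually lies in $K^\circ$, leaving only $a$ and $b$ as possible boundary points.

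The key step is a standard fact about convex sets: if $q\in K^\circ$ and $p\in K$, then every point on the half-open segment from $q$ to $p$ (excluding $p$) lies in $K^\circ$. The proof is a direct verification: picking $\eps>0$ with the open ball $B(q,\eps)\subset K$, one checks that for $x=(1-t)q+tp$ with $t\in[0,1)$, the ball $B(x,(1-t)\eps)$ is contained in $K$ by writing each of its points as a convex combination $(1-t)q'+tp$ with $q'\in B(q,\eps)$. Given a point $c\in(a,b)$ on $L$ with $c\neq q$, I would apply this fact with $p=b$ if $c>q$ and $p=a$ if $c<q$: in either case $c$ lies strictly between $q\in K^\circ$ and an endpoint of $[a,b]\subset K$, so $c\in K^\circ$ and hence $c\notin\partial K$.

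The main obstacle is purely cosmetic, namely keeping the two roles of ``interior'' straight (the topological interior $K^\circ$ in $\mathbb{R}^n$ versus the interior of the interval $[a,b]$ inside $L$); the convexity lemma above is exactly what bridges them. Once that is in place, we conclude $L\cap\partial K\subseteq\{a,b\}$, which together with Corollary~\ref{cor:compact boundary} yields $|L\cap\partial K|=2$.
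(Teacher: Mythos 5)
Your proposal is correct and follows essentially the same route as the paper: both reduce to showing that every point of the segment $L\cap K$ other than its two endpoints lies in $K^\circ$, using the standard fact that the half-open segment from an interior point to any point of a convex set stays in the interior (the paper phrases this via the translated neighbourhood $U'=\{\mu v+(1-\mu)x\mid x\in U\}$, you via shrunken balls). The only cosmetic difference is that you invoke Corollary~\ref{cor:compact boundary} for the lower bound $|L\cap\partial K|\geq 2$, whereas the paper asserts directly that $L\cap K$ is spanned by two distinct boundary points.
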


\begin{proof}
 By assumption $L\cap K$ is a line segment spanned by two different boundary points $v$ and $w$ of $K$ and there is some $u$ on $L\cap K^\circ$. Let $u\in U\subset K$ an open neighbourhood of $u$ in $K$. It remains to show that every point $u'$ on $L\cap K$ other than $v,w$ is in the interior of $K$. Without loss of generality, we can assume that $u'$ is in the line segment spanned by $v$ and $u$. Thus $u'=\mu v+(1-\mu)u$ for some $0\leq\mu<1$. The set
 \begin{equation*}
     U'=\{\mu v+(1-\mu)x\mid x\in U\}
 \end{equation*}
 is then an open neighbourhood of $u'$ in $K$.
\end{proof}

\begin{lem}\label{lem:quadri2}
 Let $Q\subset\mb{R}^2$ a convex quadrilateral. A line that intersects $Q$ intersects at least two of its edges.
\end{lem}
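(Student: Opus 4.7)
The plan is to split into two cases depending on whether the line $L$ meets the interior $Q^\circ$ of the quadrilateral.

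First, suppose $L \cap Q^\circ \neq \emptyset$. Then Lemma~\ref{lem:convextwo} immediately gives $|L \cap \partial Q| = 2$. Writing these points as $p$ and $q$, I would then need only show that they cannot lie on a single edge $e$: if they did, $L$ would coincide with the line spanned by $e$, hence contain all of $e$, and $|L \cap \partial Q|$ would be infinite, a contradiction. So $p$ and $q$ lie on two distinct edges, and $L$ meets those two edges.

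Second, suppose $L \cap Q^\circ = \emptyset$ while $L \cap Q \neq \emptyset$. Then $L$ is a supporting line of $Q$ and $L \cap Q$ is a convex subset of $\partial Q$, hence either a single point or a line segment. If it is a single point $p$, I would argue that $p$ must be a vertex: were it interior to an edge $e$, either $L$ would coincide with the line through $e$ (producing a segment of intersection, not a point) or $L$ would cross $e$ transversally and so enter $Q^\circ$, each ruled out. Since every vertex is shared by two adjacent edges, $L$ meets two edges at $p$. If instead $L \cap Q$ is a nondegenerate segment, then because no two distinct edges of a (non-degenerate) convex quadrilateral are collinear, the segment must lie inside a single edge $e$, so $L$ is the line through $e$, passes through both of its endpoint-vertices, and thereby meets the two edges adjacent to $e$ as well.

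The main obstacle, if any, is bookkeeping in the second case: one has to notice that even when $L$ contains a whole edge, the two endpoint-vertices witness intersections with the two neighboring edges, so the conclusion still holds. Once this is observed, the proof is a short case analysis built on Lemma~\ref{lem:convextwo} and elementary facts about supporting lines of convex polygons.
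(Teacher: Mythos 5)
Your proof is correct, and it rests on the same key ingredient as the paper's (Lemma~\ref{lem:convextwo}), but the case decomposition is genuinely different. You split on whether $L$ meets the interior $Q^\circ$, which forces you to analyze supporting lines in some detail: single contact point must be a vertex, segment of contact must lie in one edge, and in the latter case the two endpoint-vertices supply the intersections with the neighboring edges. The paper instead splits on whether $L$ contains a vertex of $Q$: if so, the two edges through that vertex finish the argument immediately; if not, then $L$ necessarily crosses into $Q^\circ$ and meets each edge in at most one point (since it is not the line spanned by any edge), so the two boundary points from Lemma~\ref{lem:convextwo} automatically lie on two distinct edges. The paper's split is slightly more economical because the vertex case absorbs all the degenerate supporting-line configurations at once, whereas your split handles them explicitly; your version in exchange makes the role of the supporting hyperplane more transparent. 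Both arguments are complete and correct.
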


\begin{proof}
 Let $L$ be a line that intersects $Q$. If $L$ contains a vertex of $Q$, then it intersects the two edges that contain this vertex. Thus assume that $L$ does not contain a vertex of $Q$. This implies that $L$ intersects the interior of $Q$ and that it intersects each edge in at most one point. By Lemma \ref{lem:convextwo} it intersects the boundary of $Q$, which is the union of all edges, in two points. This implies the claim.
\end{proof}

\begin{lem}\label{lem:quadrilateral}
 Let $Q\subset\mb{R}^2$ a convex quadrilateral. Let $L$ be  a line that intersects both diagonals of $Q$. Then $L$ intersects two opposite edges of $Q$.
\end{lem}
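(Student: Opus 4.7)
The plan is to label the vertices of $Q$ cyclically as $v_1,v_2,v_3,v_4$, so that the diagonals are $v_1v_3$ and $v_2v_4$, and the two pairs of opposite edges are $\{v_1v_2,\,v_3v_4\}$ and $\{v_2v_3,\,v_4v_1\}$. The approach is to translate the hypothesis ``$L$ meets both diagonals'' into a sign condition on the vertices relative to $L$, and then to show by contradiction that $L$ must meet both edges of at least one of the two opposite pairs.

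I will fix an affine linear form $f\colon\mathbb{R}^2\to\mathbb{R}$ with $L=f^{-1}(0)$ and set $a=f(v_1)$, $b=f(v_2)$, $c=f(v_3)$, $d=f(v_4)$. By the intermediate value theorem, a segment $v_iv_j$ meets $L$ if and only if $f(v_i)f(v_j)\le 0$, so the hypothesis becomes $ac\le 0$ and $bd\le 0$. Now I will suppose for contradiction that neither opposite pair is fully met by $L$; then at least one edge in each pair is missed, giving ``$ab>0$ or $cd>0$'' together with ``$bc>0$ or $da>0$''. Distributing produces four subcases, and in each one three of $a,b,c,d$ are forced to share a common strict sign. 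For example, $ab>0$ together with $bc>0$ forces $a,b,c$ to have the same strict sign, which gives $ac>0$ and contradicts $ac\le 0$; the remaining three cases are analogous and produce either $ac>0$ or $bd>0$.

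I do not anticipate any substantial obstacle: the argument is a very short case analysis on signs, and the previous lemma (guaranteeing that $L$ already hits at least two edges) is not even needed once the sign dichotomy is in place. The only subtlety is the degenerate situation in which a vertex of $Q$ lies on $L$, or an entire edge of $Q$ is contained in $L$; but the weak-inequality convention $f(v_i)f(v_j)\le 0$ absorbs these cases uniformly, since it already encodes intersection at an endpoint, and the same contradiction argument goes through unchanged.
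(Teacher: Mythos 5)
Your proof is correct, but it takes a genuinely different route from the paper's. The paper argues geometrically: it first disposes of the case where $L$ misses the interior of $Q$ (so that $L\cap Q$ is a single vertex or an entire edge), and otherwise invokes Lemma~\ref{lem:convextwo} to see that $L$ meets $\partial Q$ in exactly two points $P_1,P_2$; if these lie on two adjacent edges $E_1,E_2$, the convex hull of $E_1\cup E_2$ is a triangle whose third side is a diagonal of $Q$, and applying Lemma~\ref{lem:convextwo} again to that triangle shows $L$ misses that diagonal. You replace all of this with the observation that, for an affine form $f$ cutting out $L$, a segment meets $L$ exactly when $f$ takes values of opposite weak sign at its endpoints, after which the lemma reduces to a four-case sign check; I verified that each of the four subcases does force three of $a,b,c,d$ to share a strict sign and hence contradicts $ac\le 0$ or $bd\le 0$. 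Your version is shorter, needs no auxiliary convexity lemma (convexity enters only through the cyclic labeling that identifies the diagonals), and, as you note, absorbs the degenerate configurations---a vertex on $L$, or an edge contained in $L$---uniformly via the weak inequalities, whereas the paper treats them in separate cases. The only point worth stating explicitly is that the ``only if'' half of your intersection criterion is not literally the intermediate value theorem: it uses that $f$ restricted to a segment is affine in one parameter, hence cannot vanish strictly between two endpoints at which it has the same strict sign. That is immediate, and the proof stands.
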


\begin{proof}
 First consider the case that $L$ does not intersect the interior of $Q$. Then $L$ intersects $Q$ either in a single vertex or in an entire edge. In the former case $L$ intersects only one diagonal of $Q$. In the latter case $L$ intersects opposite edges. 
 
 Now assume that $L$ intersect the interior of $Q$. Then by Lemma \ref{lem:convextwo} the line $L$ intersects the boundary of $Q$ in exactly two different points $P_1$ and $P_2$. First assume that $P_1$ is a vertex of $Q$. Then $P_2$ cannot lie on an edge $E$ that contains $P_1$ because then $L$ intersects the interior of $Q$. Thus $L$ intersects $Q$ in at least three edges and therefore in particular in two opposite edges. 
 
 Finally assume that $P_1$ and $P_2$ both are not a vertex of $Q$. Then $P_1$ and $P_2$ lie on two different edges $E_1$ and $E_2$. Assume for the sake of a contradiction that $E_1$ and $E_2$ are not opposite. Then the convex hull of $E_1\cup E_2$ is a triangle $T$ with edges $E_1$, $E_2$ and $D$ where $D$ is a diagonal of $Q$. Because $L$ does not intersect any vertex of $T$, it intersects the interior of $T$. Therefore, the two points $P_1$ and $P_2$ are the only intersection points of $L$ with the boundary of $T$ by Lemma \ref{lem:convextwo}. In particular, our line $L$ does not intersect $D$.
\end{proof}

\begin{lem}\label{lem:quadrilateral cone}
Let $Q\subset\mb{R}^2$ be a convex quadrilateral. Let $E,E'\subset\partial Q$ be two opposite edges of $Q$. Let $x$ be the intersection of the diagonals of $Q$. Then the set of points that lie on some line passing through $E$ and $E'$ is equal to the union of $Q$ and the set of points that lie on some line passing through $E$ and $x$ (see Figure~\ref{fig:quadrilateral cone}).
\end{lem}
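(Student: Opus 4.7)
The plan is to establish the equality by proving both set inclusions. Write $\mathcal{S}$ for the set of points on some line through $E$ and $E'$, and $\mathcal{T}$ for the set of points on some line through $E$ and $x$; the claim is $\mathcal{S} = Q \cup \mathcal{T}$.

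The inclusion $Q \cup \mathcal{T} \subseteq \mathcal{S}$ is routine. Label the vertices of $Q$ so that $E = AB$ and $E' = CD$. Any $p \in Q$ is a convex combination of $A, B, C, D$, which regroups as a convex combination of some $a \in E$ and $b \in E'$, placing $p$ on the line through $a$ and $b$. For $\mathcal{T} \subseteq \mathcal{S}$, any line through $x$ and a point of $E$ meets both diagonals of $Q$ (at $x$), so by Lemma~\ref{lem:quadrilateral} it intersects two opposite edges of $Q$; since one of these is $E$, the other must be $E'$.

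For the harder inclusion $\mathcal{S} \subseteq Q \cup \mathcal{T}$, fix $p \in \mathcal{S} \setminus Q$ and let $\ell, \ell'$ denote the lines spanned by $E$ and $E'$. First I would rule out that $p$ lies on the same closed half-plane as $Q$ with respect to both $\ell$ and $\ell'$: if a line through $p$, $a \in E$, $b \in E'$ witnesses $p \in \mathcal{S}$, then either $p \in [a, b] \subseteq Q$ (impossible), or $p$ lies on the extension beyond $a$ (placing $p$ on the side of $\ell$ opposite $E'$), or beyond $b$ (placing $p$ on the side of $\ell'$ opposite $E$), each contradicting the case assumption. So, interchanging the roles of $E$ and $E'$ if necessary, $p$ lies on the side of $\ell$ opposite $E'$.

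The core of the argument is then a projection step. Let $\pi$ denote central projection from $p$ onto $\ell$, which fixes $A$ and $B$ and sends $C, D$ to points $C', D' \in \ell$. Since $p$ lies strictly on one side of $\ell$, the segments $AC$ and $BD$ stay on the opposite side of the line through $p$ parallel to $\ell$, so $\pi$ is well-defined and continuous on them; hence $\pi$ maps the segment $AC$ to $[A, C']$ and $BD$ to $[B, D']$. Since $x \in AC \cap BD$, it follows that $\pi(x) \in [A, C'] \cap [B, D']$; meanwhile, $p \in \mathcal{S}$ is equivalent to $\pi(E') \cap E \neq \emptyset$, i.e., $[C', D'] \cap [A, B] \neq \emptyset$. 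The main obstacle is to establish the implication
\[
[C', D'] \cap [A, B] \neq \emptyset \;\Longrightarrow\; [A, C'] \cap [B, D'] \subseteq [A, B].
\]
I would prove this by contradiction: fixing an orientation of $\ell$ so that $A < B$, a point of $[A, C'] \cap [B, D']$ strictly to the left of $A$ would force both $C' < A$ and $D' < A$, making $[C', D']$ disjoint from $[A, B]$; the symmetric argument rules out points to the right of $B$. This yields $\pi(x) \in [A, B] = E$, so the line $px$ meets $E$ and $p \in \mathcal{T}$.
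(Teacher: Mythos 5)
Your proof is correct, and the hard inclusion $\mathcal{S}\subseteq Q\cup\mathcal{T}$ is argued by a genuinely different route than the paper's. The paper also passes to the line $L'$ through $x$ and the external point $y$, but then argues by contradiction using Lemma~\ref{lem:quadrilateral}: if $L'$ missed $E$, it would have to cross the other pair of opposite edges, so $L'$ and the witnessing line $L$ would join complementary pairs of opposite edges and hence cross inside $Q$, contradicting the fact that they already meet at $y\notin Q$. You instead first pin down on which side of the line $\ell$ spanned by $E$ the point $p$ lies, and then linearize the whole problem by central projection from $p$ onto $\ell$, reducing it to the elementary interval implication $[C',D']\cap[A,B]\neq\emptyset\Rightarrow[A,C']\cap[B,D']\subseteq[A,B]$. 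This is more computational but self-contained: it does not need the two-dimensional crossing argument, at the cost of the preliminary half-plane case analysis. Two small points to tighten. First, in your easy direction, Lemma~\ref{lem:quadrilateral} only guarantees that a line through $x$ and $e\in E$ meets \emph{some} pair of opposite edges; to conclude that the second edge is $E'$ you should add that such a line passes through the interior and so meets $\partial Q$ in exactly two points (Lemma~\ref{lem:convextwo}), ruling out the other pair. Second, your reduction ``interchanging the roles of $E$ and $E'$'' produces, in one of the two cases, only that the line $px$ meets $E'$ and $x$; since $\mathcal{T}$ is defined via $E$, you must also record that every line through $x$ and a point of $E'$ meets $E$ --- which is exactly the symmetric form of the first point. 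Both are easy repairs and do not affect the soundness of the approach.
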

\begin{proof}
Since $Q$ is the convex hull of its four vertices, it is also the convex hull of $E$ and $E'$. Thus the latter set is contained in the former and it remains to show that if a point $y\in \mb{R}^2\backslash Q$ lies on a line $L$ through $E$ and $E'$, then $y$ lies on a line through $E$ and $x$.

Let $L'$ be the line through $x$ and $y$. We will show that $L'\cap E$ is nonempty. By assumption, $L'$ passes through $x$ and hence both diagonals of $Q$. Thus if $L'\cap E=\varnothing$, then Lemma~\ref{lem:quadrilateral} implies that $L'$ passes through the other pair of opposite edges of $Q$. Thus $L$ and $L'$ meet complementary pairs of opposite edges of $Q$, so $L$ and $L'$ must intersect within $Q$. But $y\in L\cap L'$ lies outside $Q$ by assumption. We thus either contradict this assumption or find that $|L\cap L'|\geq 2$, another contradiction.
\end{proof}

\begin{figure}
    \centering
    \begin{tikzpicture}
    \coordinate (A) at (0,0);
    \coordinate (B) at (4,0);
    \coordinate (C) at (3,2);
    \coordinate (D) at (1,3);
    \coordinate (E) at (-3/2,-1);
    \coordinate (F) at (5,-1);
    \coordinate (G) at (6,4);
    \coordinate (H) at (0,4);
    \coordinate (X) at (12/5,8/5);
    \draw[very thick,fill=black,fill opacity=.1] (A) -- (B) -- (C) -- (D) -- cycle;
    \fill[red,fill opacity=.1] (E) -- (X) -- (F) -- cycle;
    \fill[red,fill opacity=.1] (G) -- (X) -- (H) -- cycle;
    \filldraw[red] (X) circle (3pt);
    \draw[very thick,red] (A) -- (B);
    \draw[very thick,red] (C) -- (D);
    \node[xshift=-12pt] at (X) {$x$};
    \node[xshift=-12pt] at ($(A)!0.5!(D)$) {$Q$};
    \node[yshift=12pt] at ($(C)!0.5!(D)$) {$E$};
    \node[yshift=-12pt] at ($(A)!0.5!(B)$) {$E'$};
    \end{tikzpicture}
    \caption{Support of lines through opposite edges of a quadrilateral}
    \label{fig:quadrilateral cone}
\end{figure}
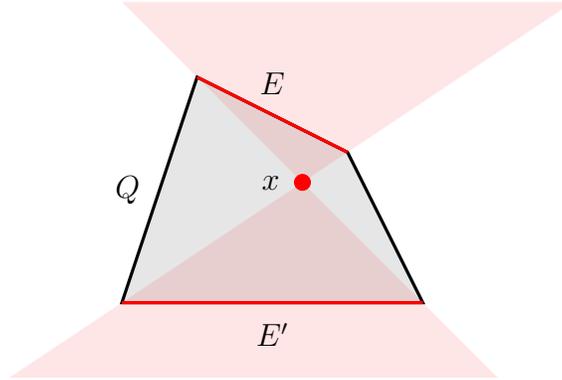

\section{Proving the conjecture}
Let $Q\subset\mb{P}^2$ a real smooth plane quartic curve. We say that a real line $L\subset\mb{P}^2$ is \emph{admissible} if it is disjoint from $Q\cap B$ for every real bitangent $B$ of $Q$. The QType~\cite[Definition 1.2]{LV21} of a real bitangent $B$ is the sign by which $B$ should be counted.

\begin{defn}
Let $f\in\mb{R}[x_0,x_1,x_2]$ be a homogeneous degree 4 polynomial such that $\mb{V}(f)=Q$. Let $B$ be a bitangent to $Q$, and denote $Q\cap B=2Z$ for $Z=z_1+z_2$ a degree 2 divisor. Let $L$ be an admissible real line. Denote by $\partial_L$ a derivation with respect to a linear form vanishing along $L$. The \textit{QType of $B$ with respect to $L$} is defined to be
\[\QT_L(B):=\mr{sign}(\partial_L f(z_1)\cdot\partial_L f(z_2))\in\{+1,-1\}.\]
\end{defn}

By \cite[Equ.~(6)]{LV21} a real bitagent can have negative $\QT$ only if it intersects $Q$ in real points only. In this case we say that the bitangent is \emph{split}.
Geometrically, a split bitangent $B$ has QType $+1$ if all connected components of $Q(\mb{R})$ that it meets lie in the same component of $\mb{R}^2\backslash B$ and QType $-1$ if it separates the connected components of $Q(\mb{R})$ that it meets (see Lemma~\ref{lem:signviahalfplanes}). Here we identify $\mb{R}^2=(\mb{P}^2\setminus L)(\mb{R})$.

\begin{rem}
A priori, $\QT_L(B)$ depends on the choice of the polynomial $f$ cutting out $Q$, as well as on the choice of linear form $\ell$ determining $\partial_L$. However, changing either of these choices changes both $\partial_L f(z_i)$ by the same non-zero scalar. This shows that
$\mr{sign}(\partial_L f(z_1)\cdot\partial_L f(z_2))$ is well-defined.
\end{rem}

For an admissible line $L\subset\mb{P}^2$ let
\begin{equation*}
 s_L(Q):=\sum_{B\textnormal{ real bitangent}} \QT_L(B)
\end{equation*}
denote the signed count (relative to $L$) of bitangents.

\begin{defn}
Let $L\subset\mb{P}^2$ a real line and $B$ a split bitangent such that $B\cap L\cap Q=\emptyset$. The \emph{grate} $g_L(B)$ of $B$ with respect to $L$ is the convex hull of $B\cap Q$ in $\mb{R}^2=(\mb{P}^2\setminus L)(\mb{R})$. We say that the grate of $B$ is \emph{positive} resp. \emph{negative} if $\QT_L(B)=+1$ or $\QT_L(B)=-1$ respectively.
\end{defn}

The statement of the next lemma can be found in \cite[p.~15]{LV21}.
\begin{lem}\label{lem:gratesformula0}
Let $B$ be a split bitangent and $L_1,L_2$ two real lines with $B\cap L_i\cap Q=\emptyset$ for $i=1,2$. Then one has
\begin{equation*}
\QT_{L_2}(B)=\begin{cases}
\QT_{L_1}(B)& \textnormal{ if } L_2\cap g_{L_1}(B)=\emptyset\\
-\QT_{L_1}(B)& \textnormal{ if } L_2\cap g_{L_1}(B)\neq\emptyset.
\end{cases}
\end{equation*}
\end{lem}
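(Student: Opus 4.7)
I plan to prove the formula by converting the definition of $\QT_L(B)$ into a concrete sign formula using the geometric characterization of QType for split bitangents recalled just before the lemma (and established in Lemma~\ref{lem:signviahalfplanes}). That characterization says $\QT_L(B) = +1$ precisely when the two tangent points $z_1,z_2$ lie on the same side of $B$ in $\mb{R}^2 = \mb{P}^2\setminus L$; equivalently, taking points $q_i$ on $Q$ near $z_i$ but slightly off $B$, $\QT_L(B) = +1$ iff $q_1$ and $q_2$ lie in the same connected component of $\mb{P}^2(\mb{R})\setminus (L\cup B)$. Because $L$ and $B$ are two distinct real projective lines, this complement has exactly two components; and since $\ell_L\cdot\ell_B$ is a quadratic form vanishing exactly on $L\cup B$ (hence its sign is invariant under the antipodal involution on lifts), those two components are separated by the sign of $\ell_L\ell_B$. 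This rewrites the QType as
\[
\QT_L(B) = \mr{sign}\bigl((\ell_L\ell_B)(q_1)\cdot(\ell_L\ell_B)(q_2)\bigr).
\]

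From here the proof is a sign computation. Multiplying the expressions for $L_1$ and $L_2$, the factors $\ell_B(q_i)^2$ appear as positive squares and drop out, leaving
\[
\QT_{L_1}(B)\cdot\QT_{L_2}(B) = \mr{sign}\bigl((\ell_{L_1}\ell_{L_2})(q_1)\cdot(\ell_{L_1}\ell_{L_2})(q_2)\bigr).
\]
Since $z_i$ lies on neither $L_1$ nor $L_2$ by admissibility, continuity of $\ell_{L_j}$ near $z_i$ lets me replace $q_i$ by $z_i$ without changing any signs, obtaining
\[
\QT_{L_1}(B)\cdot\QT_{L_2}(B) = \mr{sign}\bigl((\ell_{L_1}\ell_{L_2})(z_1)\cdot(\ell_{L_1}\ell_{L_2})(z_2)\bigr).
\]

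It then remains to interpret this last sign geometrically. It equals $+1$ iff $z_1$ and $z_2$ lie in the same component of $\mb{P}^2(\mb{R})\setminus(L_1\cup L_2)$. Passing to the affine chart $\mb{R}^2 = \mb{P}^2\setminus L_1$, where $L_2$ appears as an affine line, this is exactly the condition that $L_2$ does not separate $z_1$ from $z_2$, equivalently that the segment $[z_1,z_2]=g_{L_1}(B)$ is disjoint from $L_2$. This yields both cases of the lemma. The main subtlety is the passage from the geometric description of $\QT_L(B)$ to the concrete sign formula, specifically identifying the two components of $\mb{P}^2(\mb{R})\setminus (L\cup B)$ with the two signs of the quadratic form $\ell_L\cdot\ell_B$; once that identification is set up, everything else is straightforward bookkeeping of signs.
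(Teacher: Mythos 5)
Your proof is correct, but note that the paper does not actually prove this lemma at all --- it is quoted from \cite[p.~15]{LV21} --- so you have supplied an argument where the authors supply a citation. Your argument is sound: once one grants the geometric characterization of $\QT_L(B)$ (which is Lemma~\ref{lem:signviahalfplanes}, stated and proved later in the paper but independently of the present lemma, so there is no circularity --- though you should flag the forward reference), the identification of the two components of $\mb{P}^2(\mb{R})\setminus(L\cup B)$ with the sign locus of the quadratic form $\ell_L\ell_B$ is legitimate, the cancellation of $\ell_B(q_i)^2$ is a clean parity-of-degree argument that makes all signs independent of the choice of lifts, and the final symmetric criterion ($z_1,z_2$ in the same component of $\mb{P}^2(\mb{R})\setminus(L_1\cup L_2)$, equivalently $L_2\cap g_{L_1}(B)=\emptyset$) is exactly the dichotomy in the statement. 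Two small points of hygiene: your opening phrase ``$z_1,z_2$ lie on the same side of $B$'' is vacuous since $z_i\in B$ --- you do repair this with the nearby points $q_i$, but you should also say explicitly why the choice of $q_i$ is immaterial, namely that the contact of $Q$ with $B$ at each $z_i$ has even order, so $Q$ stays locally on one side of $B$ and both branches give the same component; and the hypothesis you need is the stated condition $B\cap L_i\cap Q=\emptyset$ (which guarantees $\ell_{L_i}(z_j)\neq 0$), not admissibility of the $L_i$. With those clarifications the proof stands as a self-contained replacement for the citation, modulo Lemma~\ref{lem:signviahalfplanes}.
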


From this one can deduce the following, which is \cite[Equ.~(7)]{LV21}.

\begin{lem}\label{lem:gratesformula}
 Let $L,L'\subset\mb{P}^2$ two admissible lines. For $\epsilon\in\{\pm1\}$ we denote by $\sigma(L,L',\epsilon)$ the number of split bitangents $B$ with $\QT_L(B)=\epsilon$ such that $L'$ intersects $g_L(B)$. Then
 \begin{equation*}
  s_{L'}(Q)=s_L(Q)-2\cdot(\sigma(L,L',+1)-\sigma(L,L',-1)).
 \end{equation*}
\end{lem}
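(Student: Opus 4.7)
The plan is to deduce the identity termwise, by computing $\QT_{L'}(B)-\QT_L(B)$ for each real bitangent $B$ and summing. This rewrites the desired formula as
$$s_{L'}(Q)-s_L(Q)=\sum_{B\text{ real bitangent}}\bigl(\QT_{L'}(B)-\QT_L(B)\bigr),$$
and I would split the right-hand side according to whether $B$ is split.

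First I would dispose of the non-split bitangents. The observation quoted just after the definition of $\QT_L$ (from \cite[Equ.~(6)]{LV21}) says that a real bitangent can have negative QType only if it is split, so for every non-split real bitangent $B$ both $\QT_L(B)$ and $\QT_{L'}(B)$ equal $+1$; these terms contribute nothing to the sum.

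Next I would apply Lemma~\ref{lem:gratesformula0} to the remaining, split, bitangents. Admissibility of $L$ and $L'$ means precisely that $B\cap L\cap Q=\emptyset$ and $B\cap L'\cap Q=\emptyset$ for every real bitangent $B$, so the hypothesis of Lemma~\ref{lem:gratesformula0} is satisfied uniformly. The lemma then gives $\QT_{L'}(B)=\QT_L(B)$ when $L'\cap g_L(B)=\emptyset$ and $\QT_{L'}(B)=-\QT_L(B)$ otherwise. Hence the only surviving contributions come from split bitangents $B$ with $L'\cap g_L(B)\neq\emptyset$, each contributing exactly $-2\QT_L(B)$ to the sum. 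Regrouping by the sign of $\QT_L(B)$ produces
$$s_{L'}(Q)-s_L(Q)=-2\bigl(\sigma(L,L',+1)-\sigma(L,L',-1)\bigr),$$
which rearranges to the claimed identity.

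There is no serious obstacle: this is a direct bookkeeping argument layered on top of Lemma~\ref{lem:gratesformula0}. The only small point to verify carefully is that admissibility of $L$ and $L'$ is exactly the hypothesis needed to apply Lemma~\ref{lem:gratesformula0} simultaneously to every split bitangent, which is immediate from the definition of admissibility.
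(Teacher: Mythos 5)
Your argument is correct and is precisely the deduction the paper alludes to: the paper gives no written proof, simply stating that the lemma "can be deduced" from Lemma~\ref{lem:gratesformula0} and citing \cite[Equ.~(7)]{LV21}, and your termwise bookkeeping (non-split bitangents contribute nothing since their QType is always $+1$, and each split bitangent whose grate meets $L'$ flips sign, contributing $-2\QT_L(B)$) is exactly that deduction spelled out. The check that admissibility of $L$ and $L'$ supplies the hypothesis $B\cap L\cap Q=\emptyset$ of Lemma~\ref{lem:gratesformula0} for every bitangent simultaneously is also the right point to verify.
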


From now on we fix an admissible line $L_0$ which is disjoint from $Q(\mb{R})$. By \cite[Theorem~1]{LV21} we have $s_{L_0}(Q)=4$. Thus for any admissible line $L$ we have that $s_L(Q)$ is even by Lemma \ref{lem:gratesformula}. Since by \cite[Proposition~4.3]{LV21} we also have $s_L(Q)\geq0$, it only remains to show that $s_L(Q)\leq8$. By Lemma \ref{lem:gratesformula} this is equivalent to proving that
\begin{equation}\label{eq:sign}
 \sigma(L_0,L,-1)-\sigma(L_0,L,+1)\leq2.
\end{equation}

The general strategy proceeds in two cases. First, if our quartic has at most 8 real bitangents, then Theorem~\ref{main theorem} is trivially true. Second, in the case that $Q$ has more than 8 real bitangents, we will, for each two connected components $Q_1,Q_2$ of $Q(\mb{R})$, study the split bitangents meeting $Q_1$ and $Q_2$ separately. We first show that there are exactly two positive and two negative grates connecting $Q_1$ and $Q_2$. We will then show that if a line meets among those four grates more negative grates than positive grates, it must meet $Q_1$ and $Q_2$. By B\'ezout's theorem, this line does not meet any other components of $Q(\mb{R})$. This yields the desired upper bound.

To begin, we characterize split bitangents determining negative grates as those separating a pair of connected components of $Q(\mb{R})$.

\begin{lem}\label{lem:signviahalfplanes}
Let $B$ be a split bitangent and $L$ a real line with $L\cap B\cap Q=\emptyset$. Let $V_1,V_2$ the closures of the two connected components of $(\mb{P}^2\setminus(L\cup B))(\mb{R})$. Then every connected component of $Q(\mb{R})$ is contained in $V_1$ or in $V_2$. We have $\QT_{L}(B)=-1$ if and only if $B$ intersects a connected component contained in $V_1$ and one contained in $V_2$.
\end{lem}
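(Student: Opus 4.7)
The plan is to prove the two assertions of the lemma separately: first the topological claim that each component of $Q(\mb{R})$ lies in $V_1$ or $V_2$, and then the sign identification, using a local analytic description of $Q$ near the tangent points.

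For the topological claim, note that in the intended application one has $L\cap Q(\mb{R})=\emptyset$ (as for our fixed line $L_0$), so every connected component $C$ of $Q(\mb{R})$ lies in $\mb{R}^2 = (\mb{P}^2\setminus L)(\mb{R})$. Since $B$ meets $Q$ set-theoretically only at $z_1,z_2$ with intersection multiplicity exactly $2$ at each, in local analytic coordinates $(u,v)$ centered at $z_i$ with $B=\{v=0\}$ the curve $Q$ is the graph $v=h(u)$ with $h(0)=h'(0)=0$ and $h''(0)\neq 0$. Thus $C$ stays on one side of $B$ near each tangent point and does not cross $B$ elsewhere, so $C$ lies entirely in one of the closed half-planes of $\mb{R}^2\setminus B$, i.e., $C\subseteq V_1$ or $C\subseteq V_2$.

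For the sign identification, I would choose affine coordinates on $\mb{R}^2$ in which $B$ is the $x$-axis. Since the point $L\cap B$ is not on $Q$, the restriction $f(x,0)$ is a quartic with double zeros at $x_1,x_2$, so $f(x,0)=c(x-x_1)^2(x-x_2)^2$ for a nonzero constant $c$ of fixed sign. Because $B$ is the tangent line to $Q$ at each $z_i=(x_i,0)$, one has $\partial_x f(z_i)=0$ and $\partial_y f(z_i)=:\mu_i\neq 0$, and the Taylor expansion reads
\[
 f(x,y) = \mu_i y + \tfrac{1}{2}\alpha_i(x-x_i)^2 + \cdots
\]
with $\alpha_i = 2c(x_1-x_2)^2$, so $\mr{sign}(\alpha_1)=\mr{sign}(\alpha_2)=\mr{sign}(c)$. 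Solving $f=0$ locally for $y$ gives $y=-\alpha_i(x-x_i)^2/(2\mu_i)+\cdots$, so $Q$ near $z_i$ lies on the side of $B$ determined by $\mr{sign}(y)=-\mr{sign}(c\mu_i)$. Hence the two tangent components of $Q$ lie on the same side of $B$, equivalently in the same $V_j$, iff $\mr{sign}(\mu_1\mu_2)=+1$.

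It remains to identify $\mr{sign}(\mu_1\mu_2)$ with $\QT_L(B)$. In the affine coordinates above, $\mu_i$ equals $\partial_L f(z_i)$ up to a nonzero factor that is common to $i=1,2$ (after accounting for the scaling invariances discussed in the remark following the definition of $\QT_L$), so $\QT_L(B)=\mr{sign}(\mu_1\mu_2)$. Combined with the topological claim this yields $\QT_L(B)=-1$ precisely when the components of $Q(\mb{R})$ meeting $B$ lie in different $V_j$'s. The main obstacle is this last identification: tying the projective quantity $\partial_L f(z_i)$ to the affine normal derivative $\mu_i$ cleanly, and confirming that the common factor is positive and independent of $i$, which requires careful bookkeeping in passing between homogeneous and affine coordinates.
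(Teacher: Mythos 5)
Your overall strategy is the same as the paper's: work in the affine chart $\mb{R}^2=(\mb{P}^2\setminus L)(\mb{R})$, observe that each component of $Q(\mb{R})$ stays on one side of $B$, and decide which side via the normal derivative of the affine equation at each tangency point. Your computation that $Q$ near $z_i$ lies on the side $\mr{sign}(y)=-\mr{sign}(c\mu_i)$, and hence that the two tangency components lie on the same side of $B$ iff $\mu_1\mu_2>0$, is correct and is in fact more explicit than the paper's version (the paper argues instead that the gradient of the affine polynomial $F$ points into the interior of the oval once one normalizes $F<0$ on the non-orientable complementary region).

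However, the step you yourself flag as "the main obstacle" --- identifying $\mr{sign}(\mu_1\mu_2)$ with $\QT_L(B)$ --- is precisely the content that connects the algebraic definition to the geometry, and leaving it open leaves the lemma unproved. The claim is true and the bookkeeping is short, so you should carry it out: choose homogeneous coordinates with $L=\{x_0=0\}$ and let $\tilde z_i$ be the representative of $z_i$ with $x_0=1$ (this normalization of representatives is where the dependence on $L$ enters; without it the sign of a product of values of the cubic $\partial_L f$ at two projective points is not even well defined). Since $B$ is the tangent line to $Q$ at $z_i$, the full gradient satisfies $\nabla f(\tilde z_i)=\lambda_i(b_0,b_1,b_2)$ where $b_0x_0+b_1x_1+b_2x_2$ cuts out $B$; hence $\partial_Lf(\tilde z_i)=\lambda_i b_0$ while your affine normal derivative is $\mu_i=\lambda_i b_2$ (in coordinates where the affine trace of $B$ is $\{y=0\}$, so $b_1=0$). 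Thus $\partial_Lf(\tilde z_i)=(b_0/b_2)\,\mu_i$ with the common nonzero factor $b_0/b_2$, and $\QT_L(B)=\mr{sign}(\mu_1\mu_2)$ as needed. A second, smaller issue: you prove the topological claim only under the extra hypothesis $L\cap Q(\mb{R})=\emptyset$. That covers the fixed line $L_0$, but the paper also invokes this lemma with $L$ equal to a bitangent $B_1$ (to compute $\QT_{B_1}(B_2)$ in Lemma~\ref{lem:four grates}(d)), where $L$ does meet $Q(\mb{R})$. Your local even-order tangency argument extends to that case, since such an $L$ meets $Q$ everywhere with even multiplicity and so never crosses a component; you should state the claim and argument in that generality rather than for $L\cap Q(\mb{R})=\emptyset$ only.
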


\begin{proof}
 It is clear that if $B$ intersects $Q$ in only one point, then $\QT_{L}(B)=+1$. Thus we may assume that $B$ intersects $Q(\mb{R})$ in two distinct points $z_1$ and $z_2$. Let $P\in(\mb{P}^2\setminus L)(\mb{R})$ any point that is not contained in the interior of any oval of $Q(\mb{R})$ (i.e. $P$ is contained in the non-orientable connected component of $(\mb{P}^2\setminus Q)(\mb{R})$). We identify $\mb{R}^2$ with $(\mb{P}^2\setminus L)(\mb{R})$ and let $F$ a polynomial on $\mb{R}^2$ defining $Q$. Without loss of generality we can assume that $F(P)<0$. Then the normal vector of $Q$ at $z_i$, which is the gradient of $F$ at $z_1$, points towards the interior of the component $Q_i$ containing $z_i$. Thus the normal vectors at $z_1$ and $z_2$ point towards different connected components of $\mb{R}^2\setminus B$ if and only if $V_1$ and $V_2$ each contain one of the components $Q_1$ and $Q_2$.
\end{proof}

\begin{cor}\label{cor:split one component}
Let $L$ be a split bitangent to a real smooth plane quartic $Q$. If $L$ only meets one connected component of $Q(\mb{R})$, then the grate associated to $L$ is positive. 
\end{cor}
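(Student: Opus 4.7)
The plan is to obtain this as an essentially immediate consequence of Lemma~\ref{lem:signviahalfplanes}. Since all grates under discussion are taken with respect to the fixed admissible line $L_0$ (disjoint from $Q(\mb{R})$), the positivity of the grate of our split bitangent -- call it $B$ to avoid the clash with the name $L$ used in Lemma~\ref{lem:signviahalfplanes} -- is equivalent to $\QT_{L_0}(B)=+1$. The hypothesis $L_0\cap Q(\mb{R})=\varnothing$ in particular gives $L_0\cap B\cap Q=\varnothing$, so Lemma~\ref{lem:signviahalfplanes} applies with $L$ replaced by $L_0$.

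The lemma then produces the two closures $V_1,V_2$ of the connected components of $(\mb{P}^2\setminus(L_0\cup B))(\mb{R})$ and tells us two things: every connected component of $Q(\mb{R})$ is entirely contained in $V_1$ or in $V_2$, and $\QT_{L_0}(B)=-1$ precisely when $B$ meets both a component that lies in $V_1$ and a component that lies in $V_2$. By hypothesis, $B$ meets a single connected component $Q_1$ of $Q(\mb{R})$, and that component lies in exactly one of the $V_i$, say $V_1$. Consequently the "both sides" condition from Lemma~\ref{lem:signviahalfplanes} fails, which forces $\QT_{L_0}(B)=+1$, i.e.\ the grate $g_{L_0}(B)$ is positive.

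There is no real obstacle here: the combinatorics of half-planes and components are handled by Lemma~\ref{lem:signviahalfplanes}, and the only content of the corollary is the tautology that a bitangent touching a single component of $Q(\mb{R})$ cannot separate two components it meets. The one mild subtlety to flag in the write-up is the change of name from $L$ (the bitangent, as in the corollary statement) to $B$ (the bitangent, as in the lemma statement), so that $L_0$ plays the role of the auxiliary line $L$ appearing in Lemma~\ref{lem:signviahalfplanes}.
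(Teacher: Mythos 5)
Your proposal is correct and is essentially the paper's own proof: the paper likewise deduces the corollary directly from Lemma~\ref{lem:signviahalfplanes}, observing that a negative QType requires the bitangent to meet two distinct connected components of $Q(\mb{R})$. Your write-up just spells out the application of the lemma (and the renaming of the bitangent) in more detail than the paper's one-line argument.
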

\begin{proof}
By Lemma~\ref{lem:signviahalfplanes}, the QType of a split bitangent can only be negative if the split bitangent meets two distinct connected components of $Q(\mb{R})$.
\end{proof}

We may thus restrict our attention to split bitangents meeting two distinct connected components of $Q(\mb{R})$. 
Let $s$ be the number of connected components of $Q(\mb{R})$ and let $a=1$ if $Q(\mb{C})\setminus Q(\mb{R})$ is connected and $a=0$ otherwise. By Harnack's theorem one has $0\leq s\leq 4$. If $s=4$, then $a=0$ and if $s=0$, then $a=1$. Finally, if $s$ is odd, then $a=1$ \cite[Prop.~3.1]{grossharris}.
The total number of real bitangents equals
$4\cdot (2^{s-1} - 1 + a)$ if $s>0$ and $4$ if $s=0$ \cite[Prop.~5.1]{grossharris}.
This implies in particular that if $s<3$ there are not more than $8$ real bitangents. Thus we can further restrict our attention to the case $s\geq3$. Then the next lemma shows that there are always four bitangents meeting any pair of two distinct connected components. Moreover, these four split bitangents determine two positive grates and two negative grates.

\begin{lem}\label{lem:four grates}
 Assume that $s\geq3$.
 Let $Q_1,Q_2$ be two distinct connected components of $Q(\mb{R})$. The following are true:
 \begin{enumerate}[(a)]
     \item There are exactly four split bitangents that meet both $Q_1$ and $Q_2$.
     \item With respect to $L_0$ the four split bitangents meeting $Q_1$ and $Q_2$ determine two positive grates $G_1^+,G_2^+$ and two negative grates $G_1^-,G_2^-$.
     \item  The two positive grates $G_1^+,G_2^+$ are edges of the convex hull of $Q_1\cup Q_2$.
     \item The two negative grates $G_1^-,G_2^-$ intersect.
 \end{enumerate}
\end{lem}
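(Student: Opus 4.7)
The plan is to work in the affine chart $\mb{R}^2 = (\mb{P}^2 \setminus L_0)(\mb{R})$, where $Q_1$ and $Q_2$ bound disjoint topological disks (since $s \geq 3$ on a smooth plane quartic forces the ovals to be pairwise non-nested). I would analyze the split bitangents meeting $Q_1$ and $Q_2$ via the convex hulls $\op{conv}(Q_1), \op{conv}(Q_2)$, and $K := \op{conv}(Q_1 \cup Q_2)$.

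For the positive grates and part (c), observe that $\partial K$ is a Jordan curve containing exactly two maximal line segments $S_1, S_2$ that bridge $\op{conv}(Q_1)$ to $\op{conv}(Q_2)$. The endpoints of each $S_i$ lie on $Q_1$ and $Q_2$ respectively, and since $Q$ is smooth the boundary $\partial K$ is $C^1$ there, so the line $\ell_i$ containing $S_i$ is tangent to $Q$ at both endpoints. Each tangency contributes intersection multiplicity at least $2$, saturating Bezout on the quartic, so $\ell_i$ is a bitangent $B_i^+$ meeting $Q_1$ and $Q_2$. As $Q_1 \cup Q_2 \subset K$ lies in one closed half-plane bounded by $B_i^+$, Lemma~\ref{lem:signviahalfplanes} gives $\QT_{L_0}(B_i^+) = +1$; its grate is $S_i$, an edge of $K$. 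This proves (c) and produces two positive grates.

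For the negative grates and part (a), I first establish that $\op{conv}(Q_1)$ and $\op{conv}(Q_2)$ are strictly separable, using the existence of a third oval and Bezout applied to lines through pocket chords to rule out pathological nested-pocket configurations. Granted strict separability, there are exactly two common internal tangent lines between $\op{conv}(Q_1)$ and $\op{conv}(Q_2)$; each is tangent to $Q_1$ and $Q_2$ at smooth extreme points, hence a bitangent $B_i^-$ of $Q$ by Bezout. Since $B_i^-$ separates $Q_1$ from $Q_2$, Lemma~\ref{lem:signviahalfplanes} gives $\QT_{L_0}(B_i^-) = -1$. Conversely, any split bitangent meeting $Q_1$ and $Q_2$ either puts them on the same side (it is then a supporting line of $K$ containing a bridge segment, hence $B_1^+$ or $B_2^+$) or on opposite sides (it is then an internal common tangent of the convex hulls, hence $B_1^-$ or $B_2^-$). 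This proves (a) and completes (b).

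For (d), the lines $B_1^-, B_2^-$ each separate $Q_1$ from $Q_2$, so they cannot be parallel: otherwise one would lie wholly on one side of the other and could not be tangent to both $Q_1$ and $Q_2$. Let $P := B_1^- \cap B_2^-$. The two tangent points of $B_2^-$ lie on opposite sides of $B_1^-$, so $P$ lies strictly between them on $B_2^-$, i.e., $P \in G_2^-$; symmetrically $P \in G_1^-$, so $G_1^- \cap G_2^- \supseteq \{P\}$. The main obstacle is justifying strict separability of $\op{conv}(Q_1)$ and $\op{conv}(Q_2)$ in the proof of (a): this does not follow from disjointness of the ovals alone when the ovals are non-convex, and must genuinely invoke the quartic structure together with $s \geq 3$.
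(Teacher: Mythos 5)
Your route is genuinely different from the paper's, and most of it is sound. The paper obtains the separating line from a theorem on orientations of $Q(\mb{R})$ induced by holomorphic differentials without real zeros, gets the two positive bitangents from a cited result on affine linear functions nonnegative on two separated convex sets, obtains the total count of four from theorems counting bitangents through a pair of components via semi-orientations, and proves (d) by a sign computation with linear forms combined with Lemma~\ref{lem:gratesformula0}. You replace all of this with elementary convex geometry --- the four split bitangents through $Q_1$ and $Q_2$ are exactly the four common supporting lines (two external, two internal) of the disjoint convex bodies $\op{conv}(Q_1)$ and $\op{conv}(Q_2)$, with B\'ezout forcing each common tangency to be an honest bitangency --- and your proof of (d) via the intersection point $P$ lying strictly between the tangent points on each $B_i^-$ is clean and arguably simpler than the paper's.

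The genuine gap is the one you flag yourself: you never prove that $\op{conv}(Q_1)$ and $\op{conv}(Q_2)$ are disjoint (equivalently, strictly separable), and every subsequent step --- the existence of exactly two bridge edges of $\op{conv}(Q_1\cup Q_2)$, the existence of exactly two internal common tangents, and the classification of all split bitangents through both components --- depends on it. This is precisely the point where $s\geq 3$ must be used and where the paper's holomorphic-differential argument does its work, so leaving it as an assertion leaves the proof incomplete. The gap is fillable by an elementary argument in the same spirit as the rest of your proposal: if a line $\ell$ passes through a point $z\in\op{conv}(Q_i)$ and misses $Q_i$, then the connected set $Q_i$ lies in one open half-plane of $\ell$, hence so does $\op{conv}(Q_i)$, contradicting $z\in\ell$; so every line through a point of $\op{conv}(Q_i)$ meets $Q_i$, and meets the oval with even total intersection multiplicity, hence with multiplicity at least $2$. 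If $z\in\op{conv}(Q_1)\cap\op{conv}(Q_2)$, the line through $z$ and any point of a third component $Q_3$ would then meet $Q$ with multiplicity at least $6>4$, contradicting B\'ezout. (The same argument disposes of the nestedness issue you mention in passing.) With this inserted, your proof goes through and is more self-contained than the paper's; without it, the central hypothesis $s\geq3$ is never actually used.
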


\begin{proof}
 Let $Q_1,\ldots,Q_s$ the connected components of $Q(\mb{R})$.
 We first claim that there is a line $L_1$ which does not intersect $Q(\mb{R})$ such that $Q_1$ and $Q_2$ are contained in two different connected components of $(\mb{P}^2\setminus (L_0\cup L_1))(\mb{R})$. Let $\omega_0$ be a linear form whose zero set is $L_0$. Since $Q$ is embedded canonically, we can regard $\omega_0$ as a holomorphic differential on $Q$ which has no real zeros. As such it induces an orientation $\Omega_0$ of $Q(\mb{R})$.  \cite[Cor.~2.2]{tottheta} says that every but at most one orientation of $Q(\mb{R})$ which agrees on $Q_1$ with $\Omega_0$ is induced by a holomorphic differential on $Q$ without real zeros. Since $s\geq3$ there are at least two orientations on $Q(\mb{R})$ that agree on $Q_1$ with $\Omega_0$ but differ from $\Omega_0$ on $Q_2$. Thus at least one of them is induced by a holomorphic differential $\omega_1$ without real zeros. This means that the rational function $\frac{\omega_0}{\omega_1}$ is positive on $Q_1$ and negative on $Q_2$. Thus $Q_1$ and $Q_2$ are contained in two different connected components of $(\mb{P}^2\setminus (L_0\cup L_1))(\mb{R})$ where $L_1$ is the line defined as the zeros set of $\omega_1$ regarded as linear form on $\mb{P}^2$. 

 Now we consider $Q(\mb{R})$ as a subset of $\mb{R}^2=(\mb{P}^2\setminus L_0)(\mb{R})$.
 We have shown that the components $Q_1$ and $Q_2$ are strictly separated by the line $L_1$ in $\mb{R}^2$.
 Thus we can apply \cite[Prop.~3.2]{tottheta} to the convex hulls of $Q_1$ and $Q_2$ which shows that there are (up to a nonzero scalar multiple) exactly two affine linear functions that are nonnegative on $Q_1\cup Q_2$ but zero in at least one point on each $Q_1$ and $Q_2$. Thus there are exactly two split bitangents $B$ with $\QT(B)=+1$ that meet both $Q_1$ and $Q_2$. Their grates are necessarily edges of the convex hull of $Q_1\cup Q_2$. 
 
For proving $(a)$, $(b)$ and $(c)$ it remains to show that the total number of real bitangents meeting $Q_1$ and $Q_2$ is four. Recall that a \emph{semi-orientation} is an equivalence class of orientations modulo global reversion. Thus there are exactly $2^{s-1}$ semiorientations on $Q(\mb{R})$. First consider the case $s=3$. Then we necessarily have $a=1$ and \cite[Thm.~6.9]{kummer2023signed} says that the number of real bitangents which meet $Q_1$ and $Q_2$ is the same as the number of semi-orientations on $Q(\mb{R})$. Since $s=3$, there are four semi-orientations which implies the claim.
 Now consider the case $s=4$. Then we necessarily have $a=0$ and \cite[Thm.~6.11]{kummer2023signed} says that the number of real bitangents which meet $Q_1$ and $Q_2$ is the same as the number of semi-orientations on $Q(\mb{R})$ that restrict to a certain fixed semi-orientation on $Q_3\cup Q_4$. Since $s=4$, there are four such semi-orientations which implies the claim.

 For part $(d)$ let $\alpha_i$ be a linear form whose zero set is the bitangent $B_i$ such that $g_{L_0}(B_i)=G_i^-$ for $i=1,2$. Since $\QT_{L_0}(B_i)=-1$, we have by Lemma \ref{lem:signviahalfplanes} that $Q_1$ and $Q_2$ are contained in the closure of different connected components of $(\mb{P}^2\setminus (L_0\cup B_i))(\mb{R})$. After replacing $\alpha_i$ by $-\alpha_i$ if necessary, we can thus assume that $\frac{\alpha_i}{\omega_0}$ is nonnegative on $Q_1$ and nonpositive on $Q_2$. Thus $\frac{\alpha_1}{\alpha_2}$ is nonnegative on $Q_1\cup Q_2$. This shows that $Q_1$ and $Q_2$ are contained in the closure of the same connected component of $(\mb{P}^2\setminus (B_1\cup B_2))(\mb{R})$ which implies $\QT_{B_1}(B_2)=+1$ and $\QT_{B_2}(B_1)=+1$. By Lemma \ref{lem:gratesformula0} and because $\QT_{L_0}(B_i)=-1$ for $i=1,2$, this shows that $B_1\cap g_{L_0}(B_2)\neq\emptyset$ and $B_2\cap g_{L_0}(B_1)\neq\emptyset$. Since $B_1$ and $B_2$ intersect in exactly one point, we must have $g_{L_0}(B_1)\cap g_{L_0}(B_2)\neq\emptyset$.
\end{proof}

\begin{figure}
    \centering
    \begin{tikzpicture}[scale=1.5]
    \def\H{30pt}
    \def\angle{40};
    \draw[very thick,cm={cos(45) ,-sin(45) ,sin(45) ,cos(45) ,(40pt,0)}]
  plot[domain=-pi:pi,samples=200]
  ({.78*\H *cos(\x/4 r)*sin(\x r)},{-\H*(cos(\x r))})
  -- cycle;
  \draw[very thick,cm={cos(-45) ,-sin(-45) ,sin(-45) ,cos(-45) ,(-40pt,0)}]
  plot[domain=-pi:pi,samples=200]
  ({.78*\H *cos(\x/4 r)*sin(\x r)},{-\H*(cos(\x r))})
  -- cycle;
  \coordinate (A) at (90pt,25.1pt);
  \coordinate (B) at (-90pt,25.1pt);
  \coordinate (C) at (90pt,-26.8pt);
  \coordinate (D) at (-90pt,-26.8pt);
  \coordinate (X) at (0pt,-5.35pt);
  \draw[thick,blue,dotted] (A) -- (B);
  \draw[thick,blue,dotted] (C) -- (D);
  \draw[thick,red,dotted] (X) --++ (\angle:80pt);
  \draw[thick,red,dotted] (X) --++ (\angle-180:70pt);
  \draw[thick,red,dotted] (X) --++ (-\angle-180:80pt);
  \draw[thick,red,dotted] (X) --++ (-\angle:70pt);
  \coordinate (A1) at (50pt,25.1pt);
  \coordinate (B1) at (-50pt,25.1pt);
  \coordinate (C1) at (33pt,-26.8pt);
  \coordinate (D1) at (-33pt,-26.8pt);
  \filldraw[blue] (A1) circle (2pt);
  \filldraw[blue] (B1) circle (2pt);
  \filldraw[blue] (C1) circle (2pt);
  \filldraw[blue] (D1) circle (2pt);
  \draw[very thick,blue] (A1) -- (B1);
  \draw[very thick,blue] (C1) -- (D1);
  \coordinate (E) at ($(X)+1.1*({cos(\angle)},{sin(\angle)})$);
  \coordinate (F) at ($(X)+1.1*({cos(-\angle-180)},{sin(-\angle-180)})$);
  \coordinate (G) at ($(X)-.9*({cos(\angle)},{sin(\angle)})$);
  \coordinate (H) at ($(X)-.9*({cos(-\angle-180)},{sin(-\angle-180)})$);
  \filldraw[red] (E) circle (2pt);
  \filldraw[red] (F) circle (2pt);
  \filldraw[red] (G) circle (2pt);
  \filldraw[red] (H) circle (2pt);
  \draw[very thick,red] (G) -- (E);
  \draw[very thick,red] (H) --(F);
  \node at (-75pt,0pt) {$Q_1$};
  \node at (75pt,0pt) {$Q_2$};
  \node[blue] at (0,35pt) {$G_1^+$};
  \node[blue] at (0,-35pt) {$G_2^+$};
  \node[red] at (-10pt,15pt) {$G_1^-$};
  \node[red] at (10pt,15pt) {$G_2^-$};
    \end{tikzpicture}
    \caption{Grates meeting two components}
    \label{fig:convex hull and grates}
\end{figure}
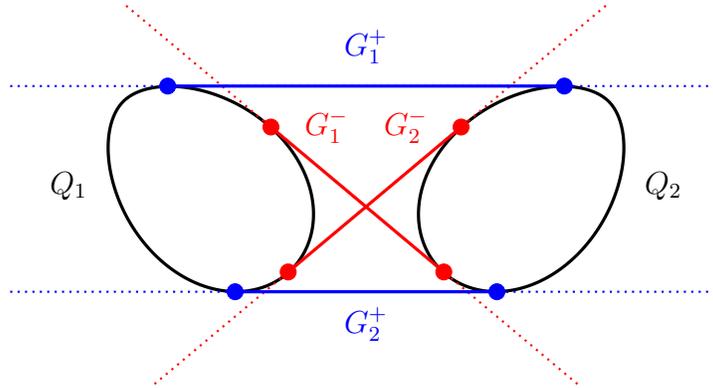

Given two connected components $Q_1,Q_2$ of $Q(\mb{R})$, we will consistently denote by $G_1^+,G_2^+$ and $G_1^-,G_2^-$ the positive and negative grates, respectively, determined by the split bitangents meeting both $Q_1$ and $Q_2$.

Each split bitangent of $Q_1,Q_2$ that determines a negative grate intersects each split bitangent that determines a positive grate. In fact, these intersections occur within the positive grates.

\begin{lem}\label{lem:intersect in positive grate}
Let $B_1,B_2$ be the split bitangents containing $G_1^-,G_2^-$. Then $|B_i\cap G_j^+|=1$ for $i,j\in\{1,2\}$.
\end{lem}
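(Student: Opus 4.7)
The plan is to combine B\'ezout's theorem with the half-plane characterization of negative QType from Lemma~\ref{lem:signviahalfplanes}. Throughout, let $p_i^1\in Q_1$ and $p_i^2\in Q_2$ denote the two tangent points of $B_i$.

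First, I would use B\'ezout's theorem to locate the full intersection of $B_i$ with $Q$. Since $B_i$ is a bitangent, the intersection multiplicities at $p_i^1$ and $p_i^2$ are each $2$, already exhausting the B\'ezout bound $1\cdot 4 = 4$. Hence $B_i\cap Q=\{p_i^1,p_i^2\}$, so each $Q_k\setminus\{p_i^k\}$ is a connected subset of $\mb{R}^2\setminus B_i$ and thus lies entirely in a single open half-plane of $B_i$. Since $\QT_{L_0}(B_i)=-1$, Lemma~\ref{lem:signviahalfplanes} guarantees that these two sets lie in \emph{opposite} open half-planes of $B_i$.

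Second, by Lemma~\ref{lem:four grates}(c) the grate $G_j^+$ is a line segment whose two endpoints $q_j^1,q_j^2$ are the tangent points (on $Q_1$ and $Q_2$, respectively) of a positive bitangent $B_j^+$. Because $\QT_{L_0}(B_j^+)=+1\neq-1=\QT_{L_0}(B_i)$ we have $B_j^+\neq B_i$, and since a smooth point of $Q$ has a unique tangent line, $q_j^k\neq p_i^k$. Therefore $q_j^k$ lies strictly in the open half-plane of $B_i$ containing $Q_k\setminus\{p_i^k\}$, so the endpoints of the segment $G_j^+$ lie in opposite open half-planes of $B_i$. Connectedness of $G_j^+$ forces $G_j^+\cap B_i\neq\varnothing$. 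Finally, since $G_j^+\subset B_j^+$ and $B_j^+\neq B_i$ are two distinct lines, $|B_i\cap G_j^+|\leq 1$, which combined with the preceding gives the desired equality.

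The one step that needs genuine care is the assertion that $Q_k\setminus\{p_i^k\}$ lies in a single open half-plane of $B_i$: this rests on B\'ezout to rule out any further intersection of $B_i$ with $Q_k$, together with the topological fact that an oval with one point removed is connected. Once this is in hand, the rest of the argument is a direct application of Lemmas~\ref{lem:signviahalfplanes} and~\ref{lem:four grates}.
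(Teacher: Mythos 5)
Your proof is correct, but it takes a genuinely different route from the paper's. The paper works with the convex hull $K$ of $Q_1\cup Q_2$: it observes that a negative bitangent $B_i$ must pass through the interior of $K$ (because it separates the two ovals), invokes Lemma~\ref{lem:convextwo} to conclude $|B_i\cap\partial K|=2$, and then argues that these two boundary points can only lie on $G_1^+\cup G_2^+$ since $B_i$ cannot meet $Q$ away from its two tangency points. You replace the convexity step with a direct separation argument: B\'ezout pins down $B_i\cap Q=\{p_i^1,p_i^2\}$ exactly, Lemma~\ref{lem:signviahalfplanes} places $Q_1\setminus\{p_i^1\}$ and $Q_2\setminus\{p_i^2\}$ in opposite open half-planes of $B_i$, and the endpoints of $G_j^+$ belong to these two sets (they differ from $p_i^1,p_i^2$ by uniqueness of tangent lines at smooth points), so the segment must cross $B_i$; distinctness of the two lines then caps the intersection at one point. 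Your version is somewhat more self-contained --- it needs neither Lemma~\ref{lem:convextwo} nor the slightly delicate description of $\partial K$ --- while the paper's version sets up the picture of $B_i$ entering and exiting the region bounded by the positive grates, which is reused in Lemma~\ref{lem:both negative grates}. Both proofs ultimately rest on the same two inputs: B\'ezout to rule out extra real intersections of $B_i$ with $Q$, and the half-plane characterization of negative QType.
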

\begin{proof}
Any two bitangents to $Q$ are distinct, so we just need to show that $|B_i\cap G_j^+|>0$. We will call a split bitangent \textit{positive} (respectively \textit{negative}) if it determines a positive (respectively negative) grate. Each positive grate touches each $Q_i$ once, so the region $R$ between the positive grates and $Q_1\cup Q_2$ is bounded by the Jordan curve theorem. The negative bitangents cannot cross the components $Q_i$, so the negative bitangents can only enter or exit $R$ through the positive grates.

By Lemma~\ref{lem:signviahalfplanes}, a split bitangent $L$ meeting two components $Q_1,Q_2$ is negative if the regions bounded by $Q_1$ and $Q_2$ lie in different components of $\mb{R}^2\backslash L$. In particular, a negative bitangent passes through the interior of the convex hull $K$ of $Q_1\cup Q_2$. By Lemma~\ref{lem:convextwo}, $|B_i\cap \partial K|=2$. The boundary $\partial K$ consists of the positive grates $G_1^+,G_2^+$, and two other boundary components  that are separated by the line segments $S_i$ connecting $G_1^+\cap Q_i$ and $G_2^+\cap Q_i$ (see Figure~\ref{fig:boundary}). Since $L$ cannot cross $Q_i$, negative bitangents cannot cross $S_i$ and hence $B_i\cap\partial K\subset G_1^+\cup G_2^+$. Thus $|B_i\cap G_1^+|+|B_i\cap G_2^+|=2$, so $|B_i\cap G_j^+|=1$.
\end{proof}

As a corollary, we find that the negative grates are contained in the interior of the convex hull of the positive grates.

\begin{cor}\label{cor:interior of positive grates}
The negative grates $G_1^-,G_2^-$ are contained in the convex hull of $G_1^+\cup G_2^+$.
\end{cor}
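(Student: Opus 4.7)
The plan is to apply Lemma~\ref{lem:quadrilateral cone} to the convex quadrilateral $C$ obtained as the convex hull of $G_1^+ \cup G_2^+$, with $G_1^+$ and $G_2^+$ playing the role of opposite edges. Let $B_i$ denote the split bitangent with $g_{L_0}(B_i) = G_i^-$, and write $G_i^- = [z_1, z_2]$ where $z_1 \in Q_1$ and $z_2 \in Q_2$ are the tangent points of $B_i$. Since $C$ is convex, the corollary reduces to showing that both $z_1$ and $z_2$ lie in $C$.

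By Lemma~\ref{lem:intersect in positive grate}, the line $B_i$ passes through both $G_1^+$ and $G_2^+$. Lemma~\ref{lem:quadrilateral cone} then implies that every point of $B_i$ lies either in $C$ or in one of the two cones (the red regions outside $Q$ in Figure~\ref{fig:quadrilateral cone}) extending from the diagonal intersection $x$ of $C$ past $G_1^+$ or past $G_2^+$. Let $K$ denote the convex hull of $Q_1 \cup Q_2$. By Lemma~\ref{lem:four grates}(c), both $G_1^+$ and $G_2^+$ are edges of $K$, so $C \subset K$ with $x$ in the interior of $K$. Since $K$ is convex, it lies on the same side of each edge $G_j^+$ as $x$, while the corresponding cone extends strictly to the opposite side. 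Consequently both cones are disjoint from $K$.

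Since $z_1, z_2 \in Q_1 \cup Q_2 \subset K$, neither tangent point can lie in either cone, and therefore $z_1, z_2 \in C$. Convexity of $C$ then gives $G_i^- = [z_1, z_2] \subset C$, as desired.

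I expect the main subtlety to be verifying that the cones produced by Lemma~\ref{lem:quadrilateral cone} are disjoint from $K$. This rests on the convexity of $K$ together with the identification of $G_1^+$ and $G_2^+$ as edges of $K$, and requires confirming that $x$ lies in the interior of $K$ so that the edges $G_j^+$ separate $K$ from the corresponding cones. Degenerate configurations (for instance, if two positive grates were to share a tangent point on $Q_1$ or $Q_2$, so that $C$ fails to be a non-degenerate quadrilateral) are non-generic and can be handled separately or ruled out by the smoothness of $Q$.
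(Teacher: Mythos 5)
Your proof is correct, but it takes a more elaborate route than the paper does. The paper's own proof is a one-line deduction from Lemma~\ref{lem:intersect in positive grate}: since $B_i$ crosses $\partial K$ (where $K$ is the convex hull of $Q_1\cup Q_2$) exactly once through each of $G_1^+$ and $G_2^+$, the chord $B_i\cap K$ is the segment joining a point of $G_1^+$ to a point of $G_2^+$, and the tangency points of $B_i$ lie on this chord because they lie on $Q_1\cup Q_2\subset K$; hence $G_i^-$ sits inside $\operatorname{conv}(G_1^+\cup G_2^+)$. You reach the same conclusion by instead invoking Lemma~\ref{lem:quadrilateral cone} to confine $B_i$ to the quadrilateral $C=\operatorname{conv}(G_1^+\cup G_2^+)$ together with the two cones past $G_1^+$ and $G_2^+$, and then using that $G_1^+,G_2^+$ are edges of $K$ (Lemma~\ref{lem:four grates}(c)) to separate those cones from $K$ by the supporting lines of the edges. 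Your separation step checks out: the points of each cone lying outside $C$ are of the form $x+t(p-x)$ with $t>1$ and $p$ on the relevant edge, hence strictly beyond that edge's supporting line, while $K$ lies in the closed half-plane on the other side. What your version buys is that it makes fully explicit why the tangency points themselves (not merely some points of $B_i$) land in the hull of the positive grates --- a point the paper's proof leaves implicit; the cost is a dependence on Lemma~\ref{lem:quadrilateral cone} and on $C$ being a nondegenerate convex quadrilateral, which the paper also assumes without comment in the proof of Lemma~\ref{lem:meeting a negative grate}.
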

\begin{proof}
Lemma~\ref{lem:intersect in positive grate} states that the split bitangents containing $G_1^-$ and $G_2^-$ intersect the positive grates, so $G_1^-,G_2^-$ are contained in the convex hull of $G_1^+\cup G_2^+$.
\end{proof}

\begin{lem}\label{lem:segmentthencurve}
 Let $Q_0$ a connected component of $Q(\mb{R})\subset\mb{R}^2=(\mb{P}^2\setminus L_0)(\mb{R})$. Let $a,b\in Q_0$ two different points and $S$ the line segment spanned by $a$ and $b$. If a line intersects $S$, then it intersects $Q_0$.
\end{lem}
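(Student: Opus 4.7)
The plan is to exhibit $Q_0$ as the union of two Jordan arcs sharing endpoints $a$ and $b$ and then apply Corollary~\ref{cor:bigon boundary} directly.

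First I would argue that every connected component of $Q(\mb{R})$ is a Jordan curve in $\mb{R}^2$. Since $Q$ is a smooth plane quartic it is irreducible (two smooth components would meet in a singular point of $Q$ by B\'ezout), and $Q(\mb{R})$ is a compact smooth $1$-manifold in $\mb{P}^2(\mb{R})$. Because $L_0$ is disjoint from $Q(\mb{R})$, every connected component of $Q(\mb{R})$ sits inside the affine chart $\mb{R}^2=(\mb{P}^2\setminus L_0)(\mb{R})$, and being a smoothly embedded copy of $S^1$ in $\mb{R}^2$ it is a Jordan curve. In particular $Q_0$ is a Jordan curve, and the two points $a,b$ partition it into two Jordan arcs $C_1$ and $C_2$ with common endpoints $a$ and $b$ satisfying $Q_0=C_1\cup C_2$.

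Next I would verify the finiteness hypothesis of Corollary~\ref{cor:bigon boundary}. Let $\ell$ denote the line containing $S$. Since $Q$ is irreducible of degree $4$ it contains no line, and B\'ezout's theorem gives $|\ell\cap Q|\leq 4$. Hence $|S\cap C_1|\leq|\ell\cap Q_0|\leq 4$ is finite. Applying Corollary~\ref{cor:bigon boundary} with $I=S$ and $C=C_1$ then shows that any line meeting $S$ also meets $C_1\subseteq Q_0$, which is the desired conclusion. The only delicate step is the verification that $Q_0$ is a Jordan curve; once that is in hand, the lemma follows directly from the earlier corollary together with B\'ezout's theorem, so I do not anticipate any serious obstacle.
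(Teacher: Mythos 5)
Your proof is correct and follows essentially the same route as the paper: split $Q_0$ into two Jordan arcs with endpoints $a$ and $b$ and apply Corollary~\ref{cor:bigon boundary} to one of them. The paper's version is terser, omitting your (valid) justifications that $Q_0$ is a Jordan curve and that the finiteness hypothesis holds via B\'ezout.
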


\begin{proof}
Let $J$ the closure of one of the two connected components of $Q_0\setminus \{a,b\}$. This is a Jordan arc with the same endpoints as $S$. Thus by Corollary~\ref{cor:bigon boundary}, any line meeting $S$ also meets $J\subset Q_0$.
\end{proof}

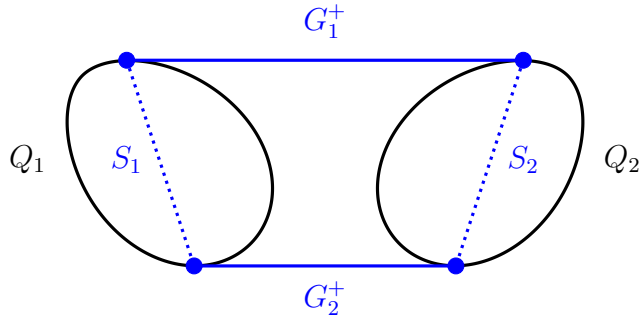
\begin{figure}
    \centering
    \begin{tikzpicture}[scale=1.5]
    \def\H{30pt}
    \def\angle{40};
    \draw[very thick,cm={cos(45) ,-sin(45) ,sin(45) ,cos(45) ,(40pt,0)}]
  plot[domain=-pi:pi,samples=200]
  ({.78*\H *cos(\x/4 r)*sin(\x r)},{-\H*(cos(\x r))})
  -- cycle;
  \draw[very thick,cm={cos(-45) ,-sin(-45) ,sin(-45) ,cos(-45) ,(-40pt,0)}]
  plot[domain=-pi:pi,samples=200]
  ({.78*\H *cos(\x/4 r)*sin(\x r)},{-\H*(cos(\x r))})
  -- cycle;
  \coordinate (A) at (90pt,25.1pt);
  \coordinate (B) at (-90pt,25.1pt);
  \coordinate (C) at (90pt,-26.8pt);
  \coordinate (D) at (-90pt,-26.8pt);
  \coordinate (X) at (0pt,-5.35pt);
  \coordinate (A1) at (50pt,25.1pt);
  \coordinate (B1) at (-50pt,25.1pt);
  \coordinate (C1) at (33pt,-26.8pt);
  \coordinate (D1) at (-33pt,-26.8pt);
  \filldraw[blue] (A1) circle (2pt);
  \filldraw[blue] (B1) circle (2pt);
  \filldraw[blue] (C1) circle (2pt);
  \filldraw[blue] (D1) circle (2pt);
  \draw[very thick,blue] (A1) -- (B1);
  \draw[very thick,blue] (C1) -- (D1);
  \draw[very thick,dotted,blue] (A1) -- (C1);
  \draw[very thick,dotted,blue] (B1) -- (D1);
  \node[blue] at (0,35pt) {$G_1^+$};
  \node[blue] at (0,-35pt) {$G_2^+$};
  \node[blue] at (-50pt,0) {$S_1$};
  \node[blue] at (50pt,0) {$S_2$};
  \node at (-75pt,0pt) {$Q_1$};
  \node at (75pt,0pt) {$Q_2$};
    \end{tikzpicture}
    \caption{Quadrilateral from positive grates}
    \label{fig:boundary}
\end{figure}

We now show that if a line meets a negative grate, it meets at least two of $Q_1,Q_2,G_1^+,G_2^+$.

\begin{lem}\label{lem:meeting a negative grate}
Let $L\subset\mb{P}^2$ be an admissible line. If $L$ meets one of the negative grates of $Q_1,Q_2$, then $L$ meets at least two of $Q_1,Q_2,G_1^+,G_2^+$. 
\end{lem}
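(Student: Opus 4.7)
Write $G_1^+=\overline{p_1p_2}$ and $G_2^+=\overline{q_1q_2}$ with $p_i,q_i\in Q_i$, and set $S_i:=\overline{p_iq_i}$. Let $K$ be the convex hull of $Q_1\cup Q_2$; around $\partial K$ the four tangent points appear in the cyclic order $p_1,p_2,q_2,q_1$, so their convex hull $\tilde R$ is a convex quadrilateral with consecutive edges $G_1^+,S_2,G_2^+,S_1$ and opposite-edge pairs $\{G_1^+,G_2^+\}$ and $\{S_1,S_2\}$. My strategy is to show that both negative grates lie in the open interior $\tilde R^\circ$, and then apply the quadrilateral lemmas of Section~2 together with Lemma~\ref{lem:segmentthencurve} to the admissible line $L$.

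To prove $G_1^-\subset\tilde R^\circ$ (the argument for $G_2^-$ being identical), let $B_1$ denote the split bitangent carrying $G_1^-$, tangent to $Q_i$ at $r_i$, and write $a_j:=B_1\cap G_j^+$ for $j=1,2$ (a single point by Lemma~\ref{lem:intersect in positive grate}). Lemma~\ref{lem:signviahalfplanes} asserts that $B_1$ strictly separates $Q_1\setminus\{r_1\}$ from $Q_2\setminus\{r_2\}$, so $p_1,q_1$ lie in one open half-plane of $B_1$ while $p_2,q_2$ lie in the other; consequently $B_1\cap S_1=B_1\cap S_2=\emptyset$. A supporting-hyperplane argument rules out $r_i$ lying on the arc $A_i\subset Q_i$ contained in $\partial K$: otherwise $B_1$ would support $K$ at $r_i$, contradicting the presence of $K^\circ$-interior points on $B_1$ between $a_1$ and $a_2$. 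Combining these observations gives $B_1\cap\partial K=\{a_1,a_2\}$, hence $B_1\cap K=\overline{a_1a_2}\subset\tilde R$. Since $a_1,a_2\notin Q$ while $r_1,r_2\in Q$, we obtain $r_1,r_2\in\overline{a_1a_2}\setminus\{a_1,a_2\}\subset\tilde R^\circ$; convexity of $\tilde R^\circ$ then gives $G_1^-=\overline{r_1r_2}\subset\tilde R^\circ$.

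Once $G_1^-,G_2^-\subset\tilde R^\circ$ is established, suppose $L$ meets some negative grate. Then $L\cap\tilde R^\circ\neq\emptyset$, so $|L\cap\partial\tilde R|=2$ by Lemma~\ref{lem:convextwo}. Admissibility of $L$ prevents it from containing any vertex of $\tilde R$ (all of which lie in $Q$), so Lemma~\ref{lem:quadri2} shows $L$ meets exactly two distinct edges of $\tilde R$. Each such edge is either a positive grate $G_j^+$ or a chord $S_i$, and in the latter case Lemma~\ref{lem:segmentthencurve} upgrades a meeting with $S_i$ to a meeting with $Q_i$. Thus $L$ meets at least two of $\{Q_1,Q_2,G_1^+,G_2^+\}$.

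The main obstacle is the inclusion $G_1^-\subset\tilde R^\circ$: a priori a tangent point $r_i$ could lie in one of the crescent regions $K\setminus\tilde R$ bounded by the arc $A_i$ and the chord $S_i$, and eliminating this possibility requires both the half-plane separation of Lemma~\ref{lem:signviahalfplanes} (to force $B_1\cap S_i=\emptyset$) and the supporting-hyperplane argument (to keep $r_i$ off $\partial K$).
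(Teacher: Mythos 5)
Your proof is correct and follows essentially the same route as the paper: reduce to the convex quadrilateral with edges $G_1^+,G_2^+,S_1,S_2$, apply Lemma~\ref{lem:quadri2}, and upgrade a meeting with $S_i$ to a meeting with $Q_i$ via Lemma~\ref{lem:segmentthencurve}. The only difference is that the bulk of your argument re-derives (in a slightly strengthened, open-interior form) the containment of the negative grates in this quadrilateral, which the paper already has available as Corollary~\ref{cor:interior of positive grates} and simply cites.
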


\begin{proof}
Let $R$ be the convex hull of $G_1^+\cup G_2^+$. Note that $R$ is a convex quadrilateral with edges $S_1$, $S_2$, $G_1^+$ and $G_2^+$, where $S_i$ is the line segment connecting $G_1^+\cap Q_i$ and $G_2^+\cap Q_i$ (see Figure~\ref{fig:boundary}). If $L$ meets a negative grate, then $L$ meets at least two edges of $R$ by Lemma~\ref{lem:quadri2}. By Lemma~\ref{lem:segmentthencurve} any line meeting $S_i$ also meets $Q_i$. Thus if $L$ meets at least two of $S_1,S_2,G_1^+,G_2^+$, then $L$ meets at least two of $Q_1,Q_2,G_1^+,G_2^+$.
\end{proof}

As a final ingredient, we show that if a line meets both negative grates, it meets either both $Q_1$ and $Q_2$ or both $G_1^+$ and $G_2^+$.

\begin{lem}\label{lem:both negative grates}
Let $L\subset\mb{P}^2$ be an admissible line. Assume that $L$ meets both $G_1^-$ and $G_2^-$. Then either $L$ meets both $Q_1$ and $Q_2$, or $L$ meets both $G_1^+$ and $G_2^+$.
\end{lem}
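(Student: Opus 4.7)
The plan is to apply Lemma~\ref{lem:quadrilateral} to a convex quadrilateral formed by the tangent points of the two negative bitangents, and then to analyze each case.

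Write $B_1, B_2$ for the split bitangents with grates $G_1^-, G_2^-$, and set $p_i:=B_i\cap Q_1$, $q_i:=B_i\cap Q_2$, so that $G_i^-=[p_i,q_i]$. By Lemma~\ref{lem:four grates}(d) the segments $G_1^-$ and $G_2^-$ cross, so $p_1,p_2,q_1,q_2$ are the vertices of a convex quadrilateral $T$ whose diagonals are $G_1^-$ and $G_2^-$. Since $L$ meets both diagonals by hypothesis, Lemma~\ref{lem:quadrilateral} shows $L$ meets a pair of opposite sides of $T$: either the \emph{pure} sides $[p_1,p_2]$ and $[q_1,q_2]$ (chords of $Q_1$ and $Q_2$ respectively), or the \emph{mixed} sides $[p_2,q_1]$ and $[p_1,q_2]$.

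In the pure-sides case, Lemma~\ref{lem:segmentthencurve} applied to each chord immediately yields $L\cap Q_1\neq\emptyset$ and $L\cap Q_2\neq\emptyset$, giving the first alternative.

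The main obstacle is the mixed-sides case, which requires showing $L$ meets both positive grates. Here my plan is to examine the partition of $R:=\operatorname{conv}(G_1^+\cup G_2^+)$ cut by the two bitangent lines. By Lemma~\ref{lem:intersect in positive grate}, each $B_i\cap R$ is a chord of $R$ joining $G_1^+$ to $G_2^+$; by Corollary~\ref{cor:interior of positive grates}, these two chords cross at a point $x:=B_1\cap B_2$ lying in the interior of $R$. Hence $B_1\cup B_2$ cuts $R$ into four convex regions, among which are two triangles $Z_B,Z_T$ with bases on $G_1^+$ and $G_2^+$ respectively (the other two regions touch $S_1,S_2$). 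Tracing the order of tangencies along each bitangent (each $B_i$ is tangent to $Q_1$ on the $Q_1$-side of the other bitangent and to $Q_2$ on the opposite side), one verifies that the endpoints of $[p_2,q_1]$ lie on the two non-base edges of $Z_B$, so $[p_2,q_1]\subset\overline{Z_B}$ by convexity; analogously $[p_1,q_2]\subset\overline{Z_T}$.

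To finish, admissibility forces $L\neq B_1,B_2$, so $L$ meets each $B_i$ at a single point lying in $G_i^-$, in the interior of $R$. The chord $L\cap R$ (provided by Lemma~\ref{lem:convextwo}) is thereby subdivided by these two crossings into three subsegments, each lying in one of the four regions of the partition. Since $\overline{Z_B}\cap\overline{Z_T}=\{x\}$ and $x\notin L$, the subsegments meeting $Z_B$ and $Z_T$ are distinct; since $Z_B,Z_T$ are not adjacent in this partition, neither of them can be the middle subsegment, so both are extremal. The two endpoints of $L\cap R$ on $\partial R$ then lie in $\partial Z_B\cap\partial R\subset G_1^+$ and $\partial Z_T\cap\partial R\subset G_2^+$, giving $L\cap G_i^+\neq\emptyset$ for $i=1,2$, as desired.
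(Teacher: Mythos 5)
Your first half coincides with the paper's: you form the convex quadrilateral on the four tangent points, note its diagonals are $G_1^-,G_2^-$, invoke Lemma~\ref{lem:quadrilateral}, and dispose of the ``pure'' case with Lemma~\ref{lem:segmentthencurve}, exactly as the paper does. In the mixed case you diverge: the paper applies Lemma~\ref{lem:quadrilateral cone} to conclude $L$ lies in the double cone through $x$ over a mixed edge (union the quadrilateral), and then argues that the positive grates separate this region; you instead cut $R=\operatorname{conv}(G_1^+\cup G_2^+)$ by the two negative bitangent lines into four sectors at $x$ and run an adjacency argument on the subsegments of the chord $L\cap R$. Your route is workable and arguably more self-contained (it never needs Lemma~\ref{lem:quadrilateral cone}), at the cost of more case-checking. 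Two steps deserve more care. First, the ``tracing'' claim that the mixed edges land in the two triangles $Z_B,Z_T$ is the crux and is only asserted; the clean justification is that each negative bitangent $B_j$ separates $Q_1$ from $Q_2$ (Lemma~\ref{lem:signviahalfplanes}), so $p_i$ and $q_i$ lie on opposite sides of $B_j$, hence $Q_1$ (and with it the chord $S_1$) lies in the sector bounded by the rays from $x$ through $p_1$ and $p_2$, and dually for $Q_2$; this forces the two remaining (mixed) sectors to be the ones whose rays exit through $G_1^+$ and $G_2^+$. You gesture at this but should cite the separation property explicitly, since it is exactly where negativity of the grates enters.

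Second, and more seriously, you assert ``$x\notin L$'' without proof, and it is not a consequence of admissibility: $L$ must avoid the tangency points $Q\cap B$, but nothing prevents an admissible line from passing through the crossing point $x=B_1\cap B_2$ of the two negative bitangents, in which case your two crossings coincide, there are only two subsegments, and the sentence ``the subsegments meeting $Z_B$ and $Z_T$ are distinct'' breaks down. The conclusion still holds in that configuration and the patch is short: $x$ lies in the interior of the quadrilateral $T$, so $x$ is not on the mixed edges; hence $L$ meets $\overline{Z_B}$ at a point of $[p_2,q_1]$ different from $x$, so $L$ enters the open sector of $Z_B$ and therefore also the opposite sector of $Z_T$, and the two ends of the chord $L\cap R$ again land on $G_1^+$ and $G_2^+$. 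You should either supply this degenerate case or rephrase the final step so that it does not rely on the two crossing points being distinct.
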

\begin{proof}
See Figure~\ref{fig:two negative grates} for an illustration of some of the objects appearing in this proof. Let $R$ be the convex hull of $G_1^-\cup G_2^-$. The negative grates intersect (necessarily within $R^\circ)$ by Lemma~\ref{lem:four grates}, which implies that the negative grates $G_1^-,G_2^-$ are the diagonals of the convex quadrilateral $R$. Let 
\begin{itemize}
    \item $E$ be the line segment connecting $G_1^-\cap Q_1$ and $G_2^-\cap Q_2$,
    \item $E'$ be the line segment connecting $G_1^-\cap Q_2$ and $G_2^-\cap Q_1$,
    \item $F$ be the line segment connecting $G_1^-\cap Q_1$ and $G_2^-\cap Q_1$, and
    \item $F'$ be the line segment connecting $G_1^-\cap Q_2$ and $G_2^-\cap Q_2$.
\end{itemize}
These are the four edges of $R$ and by Lemma~\ref{lem:quadrilateral}, $L$ either passes through both $E,E'$ or through both $F,F'$. If $L$ passes through both $F$ and $F'$, then by Lemma~\ref{lem:segmentthencurve} $L$ also intersects $Q_1$ and $Q_2$.

Now suppose that $L$ passes through both $E$ and $E'$. Let $x=G_1^-\cap G_2^-$ be the intersection of the diagonals of $R$. Let $X$ be the set of all points lying on some line through $x$ and $E$. Since $L$ intersects $E,E'$, Lemma~\ref{lem:quadrilateral cone} implies that $L$ is contained in $X\cup R$. By Lemma~\ref{lem:intersect in positive grate}, the positive grates $G_1^+,G_2^+$ bound the quadrilateral $R$ within $X\cup R$. Any line in $X\cup R$ must pass through this bounded portion, so $L$ must pass through $G_1^+$ and $G_2^+$.
\end{proof}

\begin{figure}
    \centering
    \begin{tikzpicture}[scale=1.5]
    \def\H{30pt}
    \def\angle{40};
    \draw[very thick,cm={cos(45) ,-sin(45) ,sin(45) ,cos(45) ,(40pt,0)}]
  plot[domain=-pi:pi,samples=200]
  ({.78*\H *cos(\x/4 r)*sin(\x r)},{-\H*(cos(\x r))})
  -- cycle;
  \draw[very thick,cm={cos(-45) ,-sin(-45) ,sin(-45) ,cos(-45) ,(-40pt,0)}]
  plot[domain=-pi:pi,samples=200]
  ({.78*\H *cos(\x/4 r)*sin(\x r)},{-\H*(cos(\x r))})
  -- cycle;
  \coordinate (A) at (90pt,25.1pt);
  \coordinate (B) at (-90pt,25.1pt);
  \coordinate (C) at (90pt,-26.8pt);
  \coordinate (D) at (-90pt,-26.8pt);
  \coordinate (X) at (0pt,-5.35pt);
  \coordinate (A1) at (50pt,25.1pt);
  \coordinate (B1) at (-50pt,25.1pt);
  \coordinate (C1) at (33pt,-26.8pt);
  \coordinate (D1) at (-33pt,-26.8pt);
  \filldraw[blue] (A1) circle (2pt);
  \filldraw[blue] (B1) circle (2pt);
  \filldraw[blue] (C1) circle (2pt);
  \filldraw[blue] (D1) circle (2pt);
  \draw[very thick,blue] (A1) -- (B1);
  \draw[very thick,blue] (C1) -- (D1);
  \coordinate (E) at ($(X)+1.1*({cos(\angle)},{sin(\angle)})$);
  \coordinate (F) at ($(X)+1.1*({cos(-\angle-180)},{sin(-\angle-180)})$);
  \coordinate (G) at ($(X)-.9*({cos(\angle)},{sin(\angle)})$);
  \coordinate (H) at ($(X)-.9*({cos(-\angle-180)},{sin(-\angle-180)})$);
  \coordinate (E1) at ($(X)+2.5*1.1*({cos(\angle)},{sin(\angle)})$);
  \coordinate (F1) at ($(X)+2.5*1.1*({cos(-\angle-180)},{sin(-\angle-180)})$);
  \coordinate (G1) at ($(X)-2.5*.9*({cos(\angle)},{sin(\angle)})$);
  \coordinate (H1) at ($(X)-2.5*.9*({cos(-\angle-180)},{sin(-\angle-180)})$);
  \draw[red,very thick,dotted] (E) -- (F) -- (G) -- (H) -- cycle;
  \fill[green,opacity=.2] (E1) -- (F1) -- (X) -- (G1) -- (H1) -- (X) -- cycle;
  \filldraw[red] (E) circle (2pt);
  \filldraw[red] (F) circle (2pt);
  \filldraw[red] (G) circle (2pt);
  \filldraw[red] (H) circle (2pt);
  \draw[very thick,red] (G) -- (E);
  \draw[very thick,red] (H) --(F);
  \node at (-75pt,0pt) {$Q_1$};
  \node at (75pt,0pt) {$Q_2$};
  \node[black!60!green] at (0,38pt) {$X$};
  \node[red] at (0,10pt) {$E$};
  \node[red] at (0,-17pt) {$E'$};
  \node[red] at (-28pt,-3pt) {$F$};
  \node[red] at (28pt,-3pt) {$F'$};
    \end{tikzpicture}
    \caption{Meeting two negative grates}
    \label{fig:two negative grates}
\end{figure}
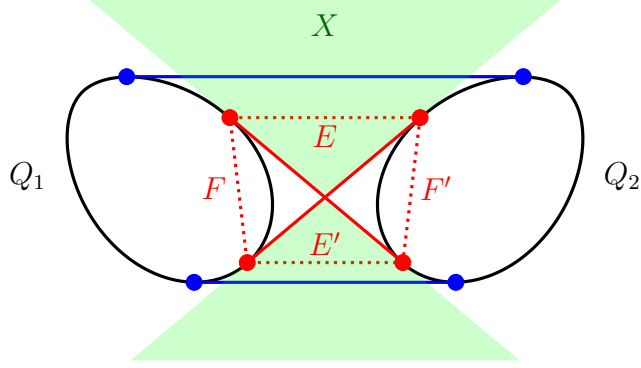

We can now prove Theorem~\ref{main theorem}.

\begin{proof}[Proof of Theorem~\ref{main theorem}]
Recall that we are trying to show that
\[\sigma(L_0,L,-1)-\sigma(L_0,L,+1)\leq 2,\]
where $L_0$ is a line disjoint from $Q(\mb{R})$ and $L$ is any admissible line (see inequality~\ref{eq:sign}). In other words, we want to show that any admissible line can meet at most 2 more negative grates than positive grates. 
Let $Q_1,\ldots,Q_s\subset Q(\mb{R})$ be the $s\geq 3$ distinct connected components. For $1\leq i<j\leq s$ and $\epsilon\in\{\pm1\}$ we denote by $\sigma_{ij}(L_0,L,\epsilon)$ the number of split bitangents $B$ with $\QT_{L_0}(B)=\epsilon$ such that $L$ intersects $g_{L_0}(B)$ and such that $B$ intersects both $Q_i$ and $Q_j$. Further let $\sigma_{ij}(L_0,L)=\sigma_{ij}(L_0,L,-1)-\sigma_{ij}(L_0,L,+1)$. By Corollary~\ref{cor:split one component} we have
\begin{equation}\label{eq:sumoverpairs}
    \sigma(L_0,L,-1)-\sigma(L_0,L,+1)\leq\sum_{1\leq i< j\leq s}\sigma_{ij}(L_0,L).
\end{equation}
For each $i\neq j$, let $G_{ij}^+,G_{ij}^{'+}$ be the positive grates and $G_{ij}^-,G_{ij}^{'-}$ be the negative grates associated to $Q_i$ and $Q_j$. We have $\sigma_{ij}(L_0,L)>0$ only if
\begin{enumerate}[(i)]
\item $L$ meets exactly one of $G_{ij}^-,G_{ij}^{'-}$ and neither of $G_{ij}^+,G_{ij}^{'+}$, or
\item $L$ meets both $G_{ij}^-,G_{ij}^{'-}$ and at most one of $G_{ij}^+,G_{ij}^{'+}$.
\end{enumerate}
In case (i), $L$ meets both $Q_i$ and $Q_j$ by Lemma~\ref{lem:meeting a negative grate}. In case (ii), $L$ meets both $Q_i$ and $Q_j$ by Lemma~\ref{lem:both negative grates}. Thus any case $\sigma_{ij}(L_0,L)>0$ only if $L$ meets both $Q_i$ and $Q_j$. 

However $L$ can meet at most two of $Q_1,\ldots,Q_s$. Indeed, if $L\cap Q_i$ is nonempty, then either $L$ is tangent to $Q_i$ (and hence meets $Q_i$ with multiplicity at least 2) or $L$ passes through the region bounded by $Q_i$ (and hence intersects $Q_i$ at least twice). Thus $L$ is either disjoint from $Q_i$ or meets $Q_i$ with multiplicity at least 2, so B\'ezout's theorem implies that $L$ meets at most two connected components of $Q(\mb{R})$. Therefore, at most one summand on the right-hand side of Equation~\ref{eq:sumoverpairs} is positive. Furthermore, by Lemma~\ref{lem:four grates} we have $\sigma_{ij}(L_0,L,-1)\leq2$ and thus $\sigma_{ij}(L_0,L)\leq2$ for all $1\leq i<j\leq s$.  This gives the desired upper bound of 2.
\end{proof}

\bibliography{main}{}
\bibliographystyle{alpha}
\end{document}